\newtheorem{thm}{Theorem}[section]
\newtheorem{lem}[thm]{Lemma}
\newtheorem{clm}[thm]{Claim}
\theoremstyle{remark}
\newtheorem{rmk}[thm]{Remark}
\theoremstyle{definition}
\newtheorem{Def}[thm]{Definition}                                        %
\title{Smooth approximations of the  Conical K\"ahler-
Ricci flows}
\author{Yuanqi Wang}
\begin{document}

\maketitle{}
\begin{abstract}
In this note, we show that the conical K\"ahler-Ricci flows introduced in 
\cite{CYW} exist for all time $t\in [0,\infty)$ in the  weak  sense as in Definition \ref{Definition of Weak flow}. As a key ingredient of the proof, we show that  a conical K\"ahler-Ricci flow is actually the limit of a sequence of smooth K\"ahler-Ricci flows. 
\end{abstract}
\tableofcontents
\section{Introduction}

This is a following up note of \cite{CYW}. Let $M$ be a polarized K\"ahler manifold and $D$ is a smooth divisor of the anti-canonical line bundle.  Suppose the  ``twisted" first Chern class
($\beta \in (0,1]$ )\[
C_{1,\beta}  =     C_1(M) - 2\pi (1-\beta) [D]
\] 
has a definite sign. Suppose $\omega_{0}$ is a smooth K\"ahler metric 
in $C_{1,\beta}$ (if $C_{1,\beta}>0$) or $-C_{1,\beta}$ (if $C_{1,\beta}<0$). One important question is to study the existence of the conical K\"ahler-Einstein metric in $(M, [\omega_0], (1-\beta)[D])$. A metric $\omega_\phi$ (cohomologous to $\omega_{0}$) is said to be K\"ahler-Einstein if
\[
Ric(\omega_\phi) =  \beta \omega_\phi + 2\pi (1-\beta)[D].
\]
This problem has been studied carefully by many authors, for instance, \cite{Berman}, \cite{Brendle}, \cite{JMR}, \cite{SongWang}, \cite{LiSun} , \cite{CGP}, \cite{EGZ}  etc.
In particular, ``conical K\"ahler-Einstein metric" is a key ingredient in the recent solution of existence problem for K\"ahler-Einstein metric with positive scalar curvature \cite{CDS0},
\cite{CDS1}, \cite{CDS2}, \cite{CDS3}. In light of these exciting development, we introduce the notion of   conical K\"ahler-Ricci flow  \cite{CYW}
\begin{equation}\label{Definition of CKRF}
{{\partial \omega_g}\over {\partial t}}  = \beta\omega_g -   Ric(g)+ 2\pi(1-\beta)[D],
\end{equation}
 to attack the existence problem of
conical K\"ahler-Einstein metrics and conical K\"ahler-Ricci solitons. With respect to the potential $\phi$, equation (\ref{Definition of CKRF}) is written as 
\begin{equation}\label{equ CKRF potential equation} \frac{\partial \phi}{\partial t}=\log\frac{\omega_{\phi}^n}{\omega_0^n}+\beta\phi+h+(1-\beta)\log|S|^2,\end{equation}
where $h$ is a smooth function.

 In \cite{CYW}, we establish short time existence for (\ref{Definition of CKRF}) and (\ref{equ CKRF potential equation}), initiated from any $(\alpha,\beta)$-conical K\"ahler metric (see Definition \ref{definition of (alpha,beta)-conical metric:single divisor.} for the definition of  $(\alpha,\beta)$-metrics). 
In this  paper  we want to establish the long time existence of this flow in a  weaker sense.

   Most of the notations in this article follows those of \cite{CYW}. For the readers' convenience, we introduce some key definitions from \cite{CYW} here.

\begin{Def} \label{definition of (alpha,beta)-conical metric:single divisor.} $(\alpha, \beta)$ conical K\"ahler metric:\  For any $\alpha \in (0, \min\{{1\over \beta}-1,1\}),\;$ a   K\"ahler form $\omega$ is said to be an $(\alpha, \beta)$  conical K\"ahler metric on $(M, (1-\beta) D)$  if it satisfies the following conditions.
\begin{enumerate}
\item $\omega$ is a closed positive $(1,1)$-current over $M$.
\item For any point $p\in D$, there exists a  holomorphic chart $\{z,u_i,\ i=1,..,n-1\}$
such that in this chart, $\omega$ is quasi-isometric to the standard cone metric
\[\omega_{\beta}=
 \beta^2|z|^{2\beta - 2} \frac{\sqrt{-1}}{2}d z \wedge d \bar z + {\sqrt{-1}\over 2}  \displaystyle \sum_{j=1}^{n-1} d u_j \wedge d \bar u_j.
\]
\item There is a $\phi\in C^{2,\alpha,\beta}(M)$  such that \[\omega=\omega_0+ i \partial \bar \partial \phi.\]   The
definition of the function space $C^{2,\alpha,\beta}(M)$ is due to Donaldson in \cite{Don}. One could also see section 2 of \cite{CYW}.
\end{enumerate}
\end{Def}

 The following model metric defined in \cite{Don} satisfies the above definition.
 \begin{equation*}\label{Model conical metric defined by Donaldson}\omega_{D}=\omega+\delta i \partial \bar \partial |S|^{2\beta}
 ,\ \textrm{where}\ \delta \ \textrm{is a small enough number}.\end{equation*}
 
 Next we define the notion of weak flows, following the definitions of weak conical K\"ahler-Einstein metrics by Gunancia-Paun \cite{GP} and Yao \cite{Yao}.

\begin{Def}\label{Definition of Weak flow}
A solution $\phi(t)$ for  $t\in [0,T)$ with $\phi(0)=\phi_{0}$ to (\ref{Definition of CKRF}) is called a weak conical K\"ahler-Ricci flow if for all $\widetilde{T}<T$, the following hold.  
\begin{itemize}
\item $\phi(t)\in C^{2+\alpha, 1+\frac{\alpha}{2}}[(M\setminus D)\times (0,\widetilde{T})]\cap  C^{\alpha, \frac{\alpha}{2}}[M\times (0,\widetilde{T})]$, $\phi_{0}\in C^{\alpha}(M)$.
\item $\lim_{t\rightarrow 0} |\phi-\phi_0|_{\alpha}=0$ over $M$. 
\item 
$|\frac{\partial \phi}{\partial t}|_{0,[0,\widetilde{T}]}+|tr_{\omega_{D}}\omega_{\phi}|_{0,[0,\widetilde{T}]}+|tr_{\omega_{\phi}}\omega_{D}|_{0,[0,\widetilde{T}]}\leq C(\omega_{\phi_0}, \widetilde{T})$
over $M\setminus D$. 
\item $\omega_{\phi}\geq  C(\widetilde{T})\omega_{D}$ over $M\setminus D$.
\end{itemize}
In particular, a closed positive $(1,1)$-currrent  $\omega=\omega_{D}+i\partial\bar{\partial} \phi$ is called a weak $(\alpha,\beta)$-metric if the following holds.
\begin{itemize}
\item  $|\phi|_{\alpha}\leq C$ over $M$, $\phi$ is $C^{2,\alpha}$ away from $D$.
\item  $\omega_{\phi}\geq  C(\widetilde{T})\omega_{D}$ over $M\setminus D$.
\item
$|tr_{\omega_{D}}\omega_{\phi}|_{0}+|tr_{\omega_{\phi}}\omega_{D}|_{0}\leq C$
over $M\setminus D$. 
\end{itemize}
\end{Def}

\begin{rmk}For convention on H\"older norms, see Definition \ref{Def convention on Holder norms}. From now on whenever we say "conical" or "$(\alpha,\beta)$"  we mean strong conical or strong $(\alpha,\beta)$, to differ from the weak conical cases.  Notice a random weak $(\alpha,\beta)$-metric  does not aprorily possess the correct cone structure along the singular divisor. 
\end{rmk}
Now we state our main theorem on the long-time existence of the weak flow.
\begin{thm}\label{long time existence of the weak flow}Suppose $\omega_{\phi_0}$ is a  $(\alpha,\beta)$-conical metric. 
Then there exists a weak conical K\"ahler-Ricci flow $\omega_{\phi}(t),\ t \in [0,\infty)$ initiated from $\phi_0$. Moreover, the weak flow above  coincides with the  strong flow given by Theorem 1.2 in \cite{CYW} till whenever the strong flow exists.
\end{thm}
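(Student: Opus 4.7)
The plan is to approximate the singular flow by a sequence of smooth twisted K\"ahler-Ricci flows obtained by smoothing the $\log|S|^2$ source term, establish $\epsilon$-uniform estimates of the type demanded by Definition \ref{Definition of Weak flow}, and then pass to the limit. Concretely, I would replace the defining identity (\ref{equ CKRF potential equation}) by
\[
\frac{\partial \phi_{\epsilon}}{\partial t}=\log\frac{\omega_{\phi_{\epsilon}}^{n}}{\omega_{0}^{n}}+\beta\phi_{\epsilon}+h+(1-\beta)\log(|S|^{2}+\epsilon),
\]
and replace the initial datum $\phi_{0}$ by a family $\phi_{0,\epsilon}\in C^{\infty}(M)$ with $\omega_{0,\epsilon}:=\omega_{0}+i\partial\bar{\partial}\phi_{0,\epsilon}$ a smooth K\"ahler metric converging to $\omega_{\phi_{0}}$ (e.g.\ via the Donaldson regularization of $|S|^{2\beta}$). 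The right-hand side is now smooth, so the standard smooth K\"ahler-Ricci flow theory produces a smooth long-time solution $\phi_{\epsilon}$ on $[0,\infty)$; the remaining work is to show that these flows are compact as $\epsilon\to 0$ and that their limit satisfies every bullet in Definition \ref{Definition of Weak flow}.

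The heart of the proof is a package of estimates on any fixed interval $[0,\widetilde{T}]$, each independent of $\epsilon$. First, a $C^{0}$ bound on $\phi_{\epsilon}$ and on $\frac{\partial \phi_{\epsilon}}{\partial t}$ comes from the maximum principle applied to $\phi_{\epsilon}$ and to its time derivative, comparing with the model potential $\delta|S|^{2\beta}$. Second, the crucial two-sided trace estimates
\[
tr_{\omega_{D}}\omega_{\phi_{\epsilon}}+tr_{\omega_{\phi_{\epsilon}}}\omega_{D}\leq C(\widetilde{T}),\qquad \omega_{\phi_{\epsilon}}\geq c(\widetilde{T})\,\omega_{D}
\]
must be obtained by a parabolic Chern-Lu / Aubin-Yau calculation applied to an auxiliary quantity such as $\log tr_{\omega_{D}}\omega_{\phi_{\epsilon}}-A\phi_{\epsilon}+B|S|^{2\beta}$, where the bisectional curvature contribution of $\omega_{D}$ and the error $i\partial\bar{\partial}\log(|S|^{2}+\epsilon)$ are absorbed by the $|S|^{2\beta}$ barrier uniformly in $\epsilon$; the reverse trace bound then follows from the Monge-Amp\`ere identity together with the $\dot\phi_{\epsilon}$ bound. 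Finally, parabolic Evans-Krylov / Calabi-Yau type interior estimates combined with the two-sided trace bounds yield a uniform parabolic H\"older estimate $|\phi_{\epsilon}|_{\alpha,\alpha/2;[0,\widetilde{T}]\times M}\leq C(\widetilde{T})$.

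With these uniform bounds in hand, an Arzel\`a-Ascoli diagonal extraction yields a subsequential limit $\phi$ with $\omega_{\phi}=\omega_{D}+i\partial\bar{\partial}\phi$ a closed positive $(1,1)$-current; all three items of Definition \ref{Definition of Weak flow} are then direct consequences of the $\epsilon$-uniform estimates listed above, and the initial condition $\lim_{t\to 0}|\phi-\phi_{0}|_{\alpha}=0$ follows by combining the H\"older bound with the convergence of $\phi_{0,\epsilon}\to\phi_{0}$. For the coincidence statement, given the strong flow $\phi^{s}(t)$ provided by Theorem 1.2 of \cite{CYW} on its interval of existence, I would compare $\phi_{\epsilon}$ with a parallel smooth approximation of $\phi^{s}$ via the maximum principle on the difference to conclude that $\phi_{\epsilon}\to\phi^{s}$ in $C^{0}$ on the strong existence interval; uniqueness of the subsequential limit then identifies the weak flow constructed above with $\phi^{s}$.

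The main obstacle, and the step on which the whole argument pivots, is obtaining the $\epsilon$-independent two-sided trace estimate and in particular the lower bound $\omega_{\phi_{\epsilon}}\geq c(\widetilde{T})\omega_{D}$. The difficulty is that the twisting $i\partial\bar\partial\log(|S|^{2}+\epsilon)$ contributes curvature terms that blow up as $\epsilon\to 0$ near $D$, and that the reference metric $\omega_{D}$ itself has unbounded bisectional curvature along $D$. Overcoming this requires a careful choice of barrier exploiting the explicit Donaldson model on a holomorphic chart around $D$, together with a Schwarz-lemma type inequality valid along the singular divisor, and this is where the bulk of the technical work lies.
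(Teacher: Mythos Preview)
Your overall strategy---regularize the source term, smooth the initial datum, run smooth flows, prove $\epsilon$-uniform estimates, and extract a limit---is exactly the paper's strategy. However, several of your specific choices do not match the paper and, as written, leave real gaps.

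\textbf{Smoothing the initial metric.} You write ``replace the initial datum $\phi_0$ by a family $\phi_{0,\epsilon}\in C^\infty(M)$ \dots e.g.\ via the Donaldson regularization of $|S|^{2\beta}$.'' This is the step the paper works hardest to set up. A generic $(\alpha,\beta)$-conical metric has $F_{\phi_0}=\log(|S|^{2-2\beta}\omega_{\phi_0}^n/\omega_0^n)$ only in $C^{\alpha,\beta}$, and with such low regularity there is no evident way to produce smoothings $\phi_{0,\epsilon}$ for which $\dot\phi_\epsilon(0)$ stays uniformly bounded (which you need for your maximum-principle argument) and for which $\omega_{\phi_{0,\epsilon}}$ is uniformly quasi-isometric to $\omega_\epsilon$. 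The paper sidesteps this: it first runs the \emph{strong} flow of \cite{CYW} for a short time to $t_0/2$ and uses Theorem~\ref{Bounding Ricci curvature when t>0} to show that $F_{t_0/2}\in C^{2,\alpha,\beta}$; only then does it smooth the data, by first solving $\Delta_{\omega_\epsilon}F_\epsilon=\Delta_{\omega_D}F+a_\epsilon$ and then an elliptic Monge--Amp\`ere equation $\omega_{\widehat\phi_\epsilon}^n=e^{F_\epsilon}(|S|^2+\epsilon)^{\beta-1}\omega_0^n$ (Theorem~\ref{smoothing the initial metric out}). This two-stage construction is what forces $\dot\phi_\epsilon(0)$ to be uniformly bounded and gives the uniform initial quasi-isometry. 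Your ``Donaldson regularization'' hint does neither.

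\textbf{The trace estimate.} You propose the auxiliary quantity $\log tr_{\omega_D}\omega_{\phi_\epsilon}-A\phi_\epsilon+B|S|^{2\beta}$, with $\omega_D$ as reference. As you yourself note, $\omega_D$ has unbounded bisectional curvature along $D$, and the barrier $|S|^{2\beta}$ does not produce the positive term needed to absorb it. The paper instead takes $\omega_\epsilon$ as reference and uses the Guenancia--Paun barrier $\Psi_{\epsilon,\rho}=\chi_\rho(\epsilon+|S|^2)$ with $\rho$ small; the point is the pair of inequalities (\ref{GP subharmonicity of Psi}) and (\ref{GP the lower bound on the curvature}), in which $\Delta_{\omega_{\phi_\epsilon}}\Psi_{\epsilon,\rho}$ contributes a positive term $C\sum_i (|S|^2+\epsilon)^{\rho-1}|\partial z/\partial w_i|^2\lambda_i^{-1}$ that exactly matches and cancels the curvature blow-up in the Siu--Bochner inequality. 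This is precisely the ``careful choice of barrier'' you allude to, and it is not $|S|^{2\beta}$.

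\textbf{Smaller points.} The global parabolic $C^{\alpha,\alpha/2}$ bound across $D$ does not come from Evans--Krylov (which gives interior estimates on $M\setminus T_\delta(D)$ only); in the paper it comes from a Harnack inequality (Theorem~\ref{Harnack inequality elliptic}, Theorem~\ref{Harnack inequality parabolic}) proved via Nash--Moser iteration, after checking that the Sobolev and Poincar\'e constants of $\omega_\epsilon$ are uniform in $\epsilon$ (Lemma~\ref{Existence of Polar coordinates of the omega beta epsilon}, Remark~\ref{Sobolev and Poincare constant bounds}). And for the coincidence with the strong flow, the paper does not compare $\phi_\epsilon$ to a second smoothing of $\phi^s$; it proves a weak-flow uniqueness lemma (Lemma~\ref{uniqueness of weak flows}) directly, via Jeffres' trick $\widehat\phi_1=\phi_1+a|S|^{2p}$ with $p<\alpha\beta/2$, and applies it once the limit has been constructed.
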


\begin{rmk} For smooth K\"ahler-Ricci flows, the global existence is proved by Cao  \cite{Cao}.  For conical K\"ahler-Ricci flow,  when $n=1$, this is recently proved
by Yin \cite{Yin2}, and also by  Mazzeo-Rubinstein-Sesum \cite{MRS} with different function spaces. For higher dimensions,  we believe  the parabolic  version of Brendle's work can solve the long time existence problem  when $\beta\leq \frac{1}{2}$.
\end{rmk}
\begin{rmk} Recently, the author learned  Liu  and Zhang also  consider the conical K\"ahler-Ricci flows on Fano manifolds. In  \cite{LiuZhang},  they also  obtain  long time solutions for conical K\"ahler-Ricci flow on Fano
manifolds. Moreover, they obtain convergence  of their conical
K\"ahler-Ricci flows in some cases. We also have the work by Bahuaud-Vertman on singular Yamabe-flows \cite{Vertman}.
\end{rmk}

\begin{rmk} For simplicity, we only present the case with one smooth divisor. Our proof certainly works with multiple smooth divisors with no intersections and with possibly different angles along each component. 
\end{rmk}

\begin{rmk} If the manifold is not Fano  or  the twisted first Chern has mixed sign,  Theorem  \ref{long time existence of the weak flow} still holds as long as the
 K\"ahler class remains to be fixed along the flow. 
\end{rmk}

The next theorem is almost equivalent to Theorem \ref{long time existence of the weak flow}.

\begin{thm}\label{Existence in the weak flow0}Suppose $\omega_{\phi_0}$ is a weak $(\alpha,\beta)$-metric such that 
\[\omega_{\phi_0}\in C_{1,\beta}(M),\ F_{\phi_0}=\log \frac{|S|^{2-2\beta}\omega^n_{\phi_0}}{\omega^n_{0}}\in C^{2,\alpha,\beta}.\] 
Then there exists a weak conical K\"ahler-Ricci flow $\omega_{\phi}(t),\ t \in [0,\infty)$ initiated from $\phi_0$.

Moreover, if in addition $\phi_0\in C^{2,\alpha,\beta}$, then  the weak flow above is strong and coincides with the strong flow given by Theorem 1.2 in \cite{CYW} till whenever the strong flow exists.
\end{thm}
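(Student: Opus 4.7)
The strategy is the one announced in the abstract: construct $\omega_\phi(t)$ as the limit of a family of smooth K\"ahler-Ricci flows with carefully regularized initial data. Writing (\ref{equ CKRF potential equation}) as $\partial_t\phi = F_\phi + \beta\phi + h$ with $F_\phi = \log\frac{|S|^{2-2\beta}\omega_\phi^n}{\omega_0^n}$, the first step is to produce smooth approximants of the singular ingredients. Replace the weight $|S|^{2(1-\beta)}$ by $(|S|^2 + \epsilon^2)^{1-\beta}$ and approximate $\phi_0$ by a family of smooth potentials $\phi_0^\epsilon$ in the same K\"ahler class so that the corresponding regularized initial datum converges to $F_{\phi_0}$ in $C^{2,\alpha,\beta}$. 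The regularized equation is a standard parabolic complex Monge-Amp\`ere equation, so long-time existence of a smooth solution $\phi^\epsilon(t)$ on $[0,\infty)$ follows from Cao \cite{Cao}.

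The heart of the argument lies in establishing $\epsilon$-uniform estimates matching each bullet of Definition \ref{Definition of Weak flow} on every compact subinterval $[0,\widetilde T]$. A $C^0$ bound on $\dot\phi^\epsilon$ comes from differentiating the equation in $t$ and applying the parabolic maximum principle, and combined with integration this gives a $C^0$ bound on $\phi^\epsilon$. The two-sided trace bounds $tr_{\omega_D}\omega_{\phi^\epsilon}$ and $tr_{\omega_{\phi^\epsilon}}\omega_D$ are obtained by a parabolic Chern-Lu/Yau computation against the background metric $\omega_D$, using an auxiliary barrier of the form $A\phi^\epsilon + B(|S|^2+\epsilon^2)^\eta$ tailored so that the curvature of $\omega_D$, which blows up along $D$, is absorbed uniformly in $\epsilon$. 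Combining these with the Monge-Amp\`ere equation and the $\dot\phi^\epsilon$ bound produces the lower bound $\omega_{\phi^\epsilon}\geq C(\widetilde T)\omega_D$ on $M\setminus D$. Finally, Krylov-Safonov and Schauder type arguments applied to the regularized equation deliver the uniform H\"older estimate $|\phi^\epsilon|_{\alpha,\alpha/2}\leq C(\widetilde T)$ on $M\times [0,\widetilde T]$.

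With these estimates in hand, Arzel\`a-Ascoli extracts a subsequence converging to some $\phi$ in $C^k_{loc}((M\setminus D)\times(0,\widetilde T])$ for every $k$ and globally in $C^{\alpha,\alpha/2}(M\times[0,\widetilde T])$. Passing to the limit in the regularized equation yields (\ref{equ CKRF potential equation}) classically on $M\setminus D$ and as a distributional identity of $(1,1)$-currents across $D$; the uniform estimates transfer to $\phi$ and verify each bullet of Definition \ref{Definition of Weak flow}. For the moreover statement, when $\phi_0 \in C^{2,\alpha,\beta}$ the strong conical short-time solution $\psi(t)$ from Theorem 1.2 of \cite{CYW} is compared with the constructed weak flow $\phi$ by applying the parabolic maximum principle to $\phi-\psi$ on $M\setminus D$, where the $C^0$ continuity of $\phi$ up to $D$ afforded by the weak flow estimates controls the boundary contribution and forces $\phi\equiv\psi$ wherever the strong flow exists.

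The main obstacle is expected to be the $\epsilon$-uniform trace and lower bounds in the second step. The parabolic Chern-Lu inequality contains curvature terms of $\omega_D$ that blow up along $D$, and the auxiliary barrier must be engineered so that its regularized singular behavior exactly cancels these terms without degenerating as $\epsilon\to 0$. Moreover, the $C^0$ control of $\dot\phi^\epsilon$ is where the hypothesis $F_{\phi_0}\in C^{2,\alpha,\beta}$ enters quantitatively, through the identity $\dot\phi^\epsilon(0) = F_{\phi_0}^\epsilon + \beta\phi_0^\epsilon + h$ and its preservation along the flow by the maximum principle.
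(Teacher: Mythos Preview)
Your outline is correct and follows the paper's approach closely: regularize, run Cao's smooth flow, prove $\epsilon$-uniform estimates in the Guenancia--Paun style, extract a limit, and identify it with the strong flow via a uniqueness lemma (the paper's Lemma~\ref{uniqueness of weak flows}, proved with Jeffres' barrier $a|S|^{2p}$ rather than a bare maximum principle on $M\setminus D$).

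The one place where the paper is more specific than your sketch, and where your description is slightly off, is the construction of the smooth initial data. You propose to ``approximate $\phi_0$ by smooth potentials $\phi_0^\epsilon$'' directly; the paper does not do this. Instead (Theorem~\ref{smoothing the initial metric out}) it first smooths $F_{\phi_0}$ by solving the linear elliptic equation $\Delta_{\omega_\epsilon}F_\epsilon=\Delta_{\omega_D}F_{\phi_0}+a_\epsilon$ --- this is precisely where $F_{\phi_0}\in C^{2,\alpha,\beta}$ enters, to bound the right-hand side --- and then solves Yau's elliptic Monge--Amp\`ere equation $\omega_{\widehat\phi_\epsilon}^n=e^{F_\epsilon}(|S|^2+\epsilon)^{\beta-1}\omega_0^n$ to produce $\widehat\phi_\epsilon$. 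The reason for this detour is that it delivers $\Delta_{\omega_\epsilon}F_\epsilon\ge -C$, which via the Guenancia--Paun elliptic Laplacian estimate gives $C^{-1}\omega_\epsilon\le\omega_{\widehat\phi_\epsilon}\le C\omega_\epsilon$ uniformly in $\epsilon$; this quasi-isometry at $t=0$ is exactly what is needed when the maximum in the parabolic Siu--Bochner argument (Step~3 of Lemma~\ref{Parabolic Laplacian estimate before rescaling}) falls on the initial slice. A generic smoothing of $\phi_0$ would not obviously give this. Note also that the parabolic trace estimate is run against the smooth reference $\omega_\epsilon$, not $\omega_D$ as you wrote, with barrier $\Psi_{\epsilon,\rho}=\chi_\rho(\epsilon+|S|^2)$; working directly against the singular $\omega_D$ along a smooth flow would be awkward.
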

\begin{rmk}Though Theorem \ref{Existence in the weak flow0} does not require the initial metric to be (strongly) $(\alpha,\beta)$, it needs the volume form of the initial metric to have $C^{2,\alpha,\beta}$ 
regularity, which is a relatively strong condition. Nevertheless, Theorem \ref{Bounding Ricci curvature when t>0} guarantees that the conical flow (\ref{Definition of CKRF}) "smoothes" a
$(\alpha,\beta)$-metric immediately to possess maximal regularity, so the requirements of Theorem \ref{Existence in the weak flow0} are satisfied. 
\end{rmk}

Next we state our result on the smooth approximations of the conical flows.
\begin{thm}\label{Thm Smooth approximation of the conical flow} Suppose the conical flow $\omega_{\phi(t)}$ (solution to (\ref{Definition of CKRF})) exists for $t\in [0,T)$. Then, for any $0<a_0<T$, the smooth flows $\omega_{\phi_{\epsilon}(t)}, t\in [\frac{a_0}{2},T)$ in (\ref{Perturbed CKRF}) approximate 
$\omega_{\phi(t)}$ in $C^{\alpha,\frac{\alpha}{2}}$-sense away from $D$ over $[a_0,T)$ i.e 
\begin{equation*}
\textrm{For all}\ \delta,\ \lim_{\epsilon\rightarrow 0}|\omega_{\phi_{\epsilon}(t)}-\omega_{\phi(t)}|_{C^{\alpha,\frac{\alpha}{2}}, [M\setminus T_{\delta}(D)]\times [a_0,T)}=0.
\end{equation*}
Consequently, for any $t\in [a_0,T)$, $\omega_{\phi_{\epsilon}(t)}$ approximates 
 $\omega_{\phi(t)}$ in the Gromov-Hausdorff sense. 
\end{thm}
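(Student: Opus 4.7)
My strategy is to deduce the approximation from $\epsilon$-uniform a priori estimates on the smooth flows $\phi_\epsilon$ that hold on compact subsets of $(M\setminus D)\times [a_0,T)$. First, on $[\tfrac{a_0}{2},T)$ I expect uniform-in-$\epsilon$ bounds on $|\phi_\epsilon|_0$ and $|\dot\phi_\epsilon|_0$ from the maximum principle applied to the perturbed potential equation (\ref{Perturbed CKRF}) and its time derivative, together with the crucial two-sided metric equivalence $c_1\omega_D \le \omega_{\phi_\epsilon} \le c_2 \omega_D$ on $M\setminus D$, obtained via a parabolic Chern-Lu / Aubin-Yau computation using $\omega_D$ as the background metric. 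The bounds on the unperturbed conical flow from the previous sections of the paper serve as the template these perturbed bounds must reproduce.

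Given the uniform two-sided metric bound on a compact subset $K\subset M\setminus D$, Evans-Krylov followed by parabolic Schauder yields $\epsilon$-uniform $C^{k,\alpha}$ bounds on $\phi_\epsilon$ for every $k$ on $K\times[a_0,T)$. A diagonal Arzel\`a-Ascoli extraction then produces a limit $\phi_\infty$, smooth on $(M\setminus D)\times[a_0,T)$, satisfying (\ref{equ CKRF potential equation}) there and matching the conical data in the weak sense near $t=\tfrac{a_0}{2}$. Uniqueness of the weak conical flow (Definition \ref{Definition of Weak flow}) then forces $\phi_\infty=\phi$, so the whole family converges rather than merely a subsequence, giving the stated $C^{\alpha,\alpha/2}$ convergence on $[M\setminus T_\delta(D)]\times [a_0,T)$. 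For the Gromov-Hausdorff statement at fixed $t$, the uniform bound $\omega_{\phi_\epsilon(t)}\le c_2\omega_D$ implies the $\omega_{\phi_\epsilon(t)}$-diameter of $T_\delta(D)$ is $o_\delta(1)$ uniformly in $\epsilon$, and the same holds for $\omega_{\phi(t)}$; pairing this with the $C^\alpha$ convergence on $M\setminus T_\delta(D)$ produces explicit GH $(\delta+o_\epsilon(1))$-approximations, and one lets $\epsilon\to 0$ then $\delta\to 0$.

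The main obstacle is the uniform-in-$\epsilon$ lower bound $\omega_{\phi_\epsilon}\ge c_1\omega_D$. In the Chern-Lu computation the Ricci term generated by $\omega_{\phi_\epsilon}$ threatens to blow up as $\epsilon\to 0$, and it must be absorbed using the $\beta\omega_{\phi_\epsilon}$ term in the flow equation, paired with a barrier built from $|S|^{2\beta}$ that is compatible with the smoothing. A second delicate point is ensuring that the perturbed initial data $\phi_\epsilon(\tfrac{a_0}{2})$ approximates $\phi(\tfrac{a_0}{2})$ strongly enough for the uniqueness argument to pick out $\phi_\infty=\phi$; having $\tfrac{a_0}{2}$ strictly below $a_0$ gives the flow a short interval to smooth out any mismatch, and this is why the conclusion is stated on $[a_0,T)$ rather than on all of $[0,T)$.
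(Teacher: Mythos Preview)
Your plan is essentially the paper's own argument: uniform $C^0$ and $\dot\phi$ bounds by maximum principle, a uniform two-sided Laplacian estimate via a Siu-Bochner/Aubin-Yau computation with a Guenancia--Paun type barrier, then Evans--Krylov on compact sets of $M\setminus D$, Arzel\`a--Ascoli, and identification of the limit with $\phi$ via the weak-flow uniqueness lemma; the GH statement then follows as you describe (the paper simply cites Chen--Donaldson--Sun for this last step). One technical point worth aligning with the paper: the second-order estimate is run against the smooth reference metric $\omega_\epsilon$ rather than the singular $\omega_D$, and the barrier is the smoothed $\Psi_{\epsilon,\rho}=\chi_\rho(\epsilon+|S|^2)$---the obstruction you flag is not the Ricci of $\omega_{\phi_\epsilon}$ but the unbounded bisectional curvature of $\omega_\epsilon$ as $\epsilon\to 0$, which is exactly what the $\Psi_{\epsilon,\rho}$ term absorbs.
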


\begin{rmk}\label{rmk turbular neighborhood} The $T_{\delta}(D)$ is the turbular neighborhood of the divisor $D$ of radius $\delta$ (with respect to the smooth reference metric $\omega_{0}$). Actually it doesn't matter whether we use $\omega_{0}$ or the conic metric $\omega_{D}$. 
\end{rmk}

\begin{rmk}Theorem \ref{Thm Smooth approximation of the conical flow} indicates a phenomenon which we never expected: the cone singularity structure is somehow stable even under smooth K\"ahler-Ricci flow. Namely, starting from the smooth metric $\omega_{\phi_{\epsilon}(\frac{a}{2})}$  which is close to a conical metric in Gromov-Hausdorff sense, the smooth flow $\omega_{\phi_{\epsilon}(t)}$ stays close to 
conical metrics at least up to finite time.  A natural and interesting question to ask is: what's the behaviour of the flow $\omega_{\phi_{\epsilon}(t)}$ (solution to \ref{Perturbed CKRF}) as $t\rightarrow \infty$?
\end{rmk}
The next theorem indicates that the conical flow constructed in Theorem 
1.2 in \cite{CYW} has the "smoothing" property. 
\begin{thm}\label{Bounding Ricci curvature when t>0}
There exists  a uniform $C$ as in Definition \ref{Convention on the constant} with the following properties.  Assumptions as in Theorem 1.2 of \cite{CYW}. The Ricci curvature of the (strong) CKRF satisfies 
\[|Ric|\leq Ct^{-1}\ \ \textrm{over}\ M\setminus D\ \textrm{for all}\ t\in [0,t_0],\]
where $t_0$ is a lower bound of the  existence time for the conical flow (\ref{Definition of CKRF}) and $t_0$ is small enough with respect to the initial metric $\omega_{\phi_0}$. 
Moreover, we have the following weighted Schauder estimate for $\frac{\partial \phi}{\partial t}$.
\[|\frac{\partial \phi}{\partial t}|^{(\star)}_{2+\alpha,1+\frac{\alpha}{2},\beta,M\times [0,t_0]}\leq C.\]
\end{thm}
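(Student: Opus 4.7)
The plan is to differentiate the potential equation~(\ref{equ CKRF potential equation}) in $t$, solve the resulting linear parabolic equation for $u:=\tfrac{\partial\phi}{\partial t}$ against the evolving conical background, and read off the Ricci bound from the equation itself. Differentiating produces
\[\frac{\partial u}{\partial t}=\Delta_{\omega_\phi}u+\beta u\qquad\text{on }M\setminus D,\]
with initial datum $u(0)=\log\tfrac{\omega_{\phi_0}^n}{\omega_0^n}+\beta\phi_0+h+(1-\beta)\log|S|^2$, which lies in $C^{\alpha,\beta}(M)$ and in particular is uniformly bounded because $\omega_{\phi_0}$ is $(\alpha,\beta)$-conical. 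The parabolic maximum principle applied to $e^{-\beta t}u$, legitimate thanks to the uniform $(\alpha,\beta)$-conical bounds on $\omega_\phi$ over $[0,t_0]$ supplied by Theorem~1.2 of \cite{CYW}, then yields $|u|_{C^0(M\times[0,t_0])}\le e^{\beta t_0}|u(0)|_{C^0}\le C$.

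Next I would apply a parabolic weighted Schauder estimate in Donaldson's conical H\"older spaces to the equation for $u$. Its coefficient metric $\omega_\phi$ sits in a uniform $C^{\alpha,\frac{\alpha}{2},\beta}$ ball on $M\times[0,t_0]$, so the equation is uniformly parabolic in the conical sense; the $(\star)$-weight is exactly what absorbs the gap between a merely H\"older initial datum and the claimed positive-time $C^{2+\alpha,1+\frac{\alpha}{2},\beta}$ regularity, in direct analogy with the smooth fact that a bounded solution of a uniformly parabolic linear equation has $|\nabla^2 u|\le Ct^{-1}$. This delivers
\[|u|^{(\star)}_{2+\alpha,1+\frac{\alpha}{2},\beta,M\times[0,t_0]}\le C,\]
which is the second assertion. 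For the Ricci bound I then take $i\partial\bar\partial$ of~(\ref{equ CKRF potential equation}) and use $Ric(\omega_\phi)=Ric(\omega_0)-i\partial\bar\partial\log\tfrac{\omega_\phi^n}{\omega_0^n}$ together with $i\partial\bar\partial\log|S|^2=2\pi[D]-\Theta_D$ (where $\Theta_D$ is the curvature of the line bundle of $D$) to rewrite
\[Ric(\omega_\phi)=-\,i\partial\bar\partial u+\beta\omega_\phi+\Theta\quad\text{on }M\setminus D,\]
where $\Theta:=Ric(\omega_0)-\beta\omega_0+i\partial\bar\partial h-(1-\beta)\Theta_D$ is a fixed smooth $(1,1)$-form. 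The $(\star)$-norm bound converts into $|i\partial\bar\partial u|_{\omega_D}\le Ct^{-1}$, and $\omega_\phi$ is itself uniformly comparable to $\omega_D$ on $[0,t_0]$, so $|Ric|_{\omega_D}\le Ct^{-1}$ on $M\setminus D$, which is the first assertion.

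The main obstacle is justifying the conical parabolic Schauder step rigorously when the coefficient metric is only in a $C^{\alpha,\frac{\alpha}{2},\beta}$ ball and the initial datum is only H\"older. I would handle this by approximation: invoke the smooth flows $\omega_{\phi_\epsilon}$ of Theorem~\ref{Thm Smooth approximation of the conical flow}, run the standard parabolic Schauder machinery for $u_\epsilon:=\tfrac{\partial\phi_\epsilon}{\partial t}$ against the \emph{smooth} coefficient $\omega_{\phi_\epsilon}$ in the weighted spaces, with estimates uniform in $\epsilon$ (the uniform conical Schauder bounds on $\omega_{\phi_\epsilon}$ needed for this are exactly what Theorem~\ref{Thm Smooth approximation of the conical flow} provides), and then pass to the limit $\epsilon\to 0$. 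Stability of the $(\star)$-norm under this approximation gives the weighted estimate for $u$ and hence the Ricci bound for the limiting conical flow.
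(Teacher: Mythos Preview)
Your overall strategy---differentiate the potential equation, obtain the linear heat equation $\partial_t v=\Delta_\phi v$ for $v=e^{-\beta t}\tfrac{\partial\phi}{\partial t}$, apply a weighted parabolic Schauder estimate, and read off the Ricci bound from $i\partial\bar\partial\tfrac{\partial\phi}{\partial t}=-Ric+\beta\omega_\phi$ on $M\setminus D$---is exactly the paper's route. The difference, and the genuine gap, lies in how you justify the Schauder step.

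Invoking Theorem~\ref{Thm Smooth approximation of the conical flow} here is circular. In the paper's logical order that theorem is proved \emph{after} and \emph{using} Theorem~\ref{Bounding Ricci curvature when t>0}: the approximation flows of Section~\ref{Construction of the approximation flows} are launched from a positive time $a_0/2$ and require, via Theorem~\ref{smoothing the initial metric out} and Theorem~\ref{Existence in the weak flow0}, that the volume potential $F_{a_0/2}=\log\tfrac{|S|^{2-2\beta}\omega_\phi^n}{\omega_0^n}$ lie in $C^{2,\alpha,\beta}$. For a generic $(\alpha,\beta)$ initial metric this regularity is not available at $t=0$; it is precisely Theorem~\ref{Bounding Ricci curvature when t>0} that supplies it at positive times (see the proof of Theorem~\ref{long time existence of the weak flow}). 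In addition, Theorem~\ref{Thm Smooth approximation of the conical flow} only asserts $C^{\alpha,\frac{\alpha}{2}}$ convergence away from $D$; it does not provide the uniform \emph{conical} $C^{2+\alpha,1+\frac{\alpha}{2},\beta}$ control of $\omega_{\phi_\epsilon}$ across $D$ that you would need to run the weighted Schauder estimate for $u_\epsilon$ with constants independent of $\epsilon$.

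The paper sidesteps approximation entirely at this point. It first establishes the qualitative regularity $v\in C^{2+\alpha,1+\frac{\alpha}{2},\beta}(0,T]$ (Claim~\ref{clm regularity of v}) by a cutoff-and-uniqueness trick: multiply by a time cutoff $\eta(t)$, solve $\partial_t U=\Delta_\phi U+\eta' v$, $U(\cdot,0)=0$, using the linear existence Theorem~1.8 of \cite{CYW} (here $\eta' v\in C^{\alpha,\frac{\alpha}{2},\beta}$ is the known data), and then show $U=\eta v$ via Jeffres's barrier argument. Once $v$ is known to sit in the target space, the interior parabolic Schauder estimate (equation~(21) in \cite{CYW}) yields the $(\star)$-weighted bound directly, and the Ricci estimate follows.
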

The definition of the norm $|\ast |^{(\star)}_{2+\alpha,1+\frac{\alpha}{2},\beta,M\times [0,t_0]}$ can be found in section 2 of \cite{CYW}.

     For the proof of Theorem \ref{long time existence of the weak flow}, \ref{Existence in the weak flow0}, and \ref{Thm Smooth approximation of the conical flow}, we notice two beautiful  recent work by Guenancia-Paun \cite{GP} and Yao \cite{Yao}, where they prove independently
    the existence of weak conical K\"ahler-Einstein metrics under appropriate assumptions---these approaches have been
    taken up by others before, the new feature in their work is that the approximation stays uniformly quasi-isometric to the approximated model
metrics.  While we initially plan to use ideas from Yao where we need to do local cutting and pasting, we notice the beautiful construction of global barrier function in Guenancia-Paun which fits into what we want very nicely. So we end up adopting Guenancia-Paun's method, although we believe
    Yao's idea can be made to work as well.   \\
    
    Acknowledgements: This is a side project which grows out of a joint project with Prof  Xiuxiong Chen on the conical KRFs.
The author would like to thank Prof Chen for kindly suggesting this project and for his constant support
over years. The author also would like to thank Chengjian Yao for related discussions.

\section{Approximating the  initial metric.\label{section of approximating the intial metric}}

 We would like to make following convention on the constants and H\"older norms in this paper, similar to that of \cite{CYW}.
\begin{Def}\label{Convention on the constant} 
   \textit{Without further notice, the  "C" in each estimate  means a constant depending on the dimension $n$, the angle $\beta$, the background objects $(M,\omega_{0},L,h,D,\omega_{D})$, the $\alpha$ (and $\acute{\alpha}$ if any) in the same estimate or in the corresponding theorem (proposition, corollary, lemma), and finally the time $T$. Moreover, the "C" in different places might be different. }
   \end{Def}
   
    \begin{Def}\label{Def convention on Holder norms}(Convention on H\"older norms) For the various intrinsic H\"older norms with respect to $\omega_{\beta}$ (for example the $C^{,\alpha,\beta}$-norm  and its parabolic counterpart $C^{,\alpha,\frac{\alpha}{2},\beta}$), we mainly refer to section 2 of \cite{CYW} for the full definitions. The point is that, in this article we mainly  consider usual H\"older spaces and norms (without any additional "$\beta$" in the notations). The reason is that H\"older continuity with respect to $\omega_{\beta}$ is equivalent to H\"older continuity in the usual sense (apart from a difference of H\"older exponents). For a precise statement, see Lemma \ref{lem Holder continuity w.r.t different background metrics}.
 
 It's helpful to recall the definition of the  parabolic H\"older norm. For any parabolic cylinder  $B\times [T_1,T_2]$, the $C^{\alpha,\frac{\alpha}{2}}(B\times [T_1,T_2])$-norm is defined as
 \begin{eqnarray*}& &{|u|}_{\alpha,\frac{\alpha}{2}, B \times [T_1,T_2] } 
 \\&=&   \sup_{(x,t_1),(y,t_2)\in B \times [T_1,T_2]} \frac{|u(x,t_1)-u(y,t_2)|}{|x-y|^{\alpha}+|t_1-t_2|^{\frac{\alpha}{2}}}+ |u|_{0,B \times [T_1,T_2]},
 \end{eqnarray*}
 where $|u|_{0,B \times [T_1,T_2]}$ is  the usual $C^{0}-$norm. The $C^{,\alpha,\frac{\alpha}{2}}(B\times [T_1,T_2])-$space contains exactly those functions with finite ${|\ |}_{\alpha,\frac{\alpha}{2}, B \times [T_1,T_2] }-$norm. The global norms over $M$ or $M\times [T_{1},T_{2}]$ are defined by summing up the norms in each coordinate chart. This is very flexible: if we use the intrinsic coordinates of $\omega_{\beta}$ near $D$, we obtain the  $C^{,\alpha,\frac{\alpha}{2},\beta}(M\times [T_1,T_2])$-space; if we use the usual smooth coordinates near $D$, then we obtain the usual H\"older space
 $C^{\alpha,\frac{\alpha}{2}}(M\times [T_1,T_2])$.

 \end{Def}
   
  To construct the approximation flows, the first step is to construct an approximation of the initial metric in Theorem \ref{Existence in the weak flow0}. From now on we will repeatedly apply Theorem \ref{Harnack inequality elliptic}. \\

The initial metric $\omega_{\phi(0)}$ satisfies 
\begin{equation}
\omega^n_{\phi(0)}=\frac{e^{F(0)}}{|S|^{2-2\beta}}\omega^n_{0}.
\end{equation}
We try to smooth $F(0)$ first. We consider  the reference metric $\omega_{\epsilon}$, introduced in section 3.1 of \cite{GP} as 
\begin{equation}
\omega_{\epsilon}=\omega_{0}+\frac{1}{N}i\partial \bar{\partial}\chi_{\beta}(\epsilon+|S|^2),\ \textrm{where}\ \chi_{\beta}(\epsilon+y)=\beta\int^{y}_{0}\frac{(\epsilon+x)^{\beta}-\epsilon^{\beta}}{x}dx
\end{equation}
and N is a big enough number. As in \cite{GP}, We also denote 
\begin{equation*}
\Psi_{\epsilon,\rho}=\chi_{\rho}(\epsilon+|S|^2).
\end{equation*}
In application we always let $\rho$ to be small with respect to $\beta$, as in \cite{GP}.
\begin{thm}\label{smoothing the initial metric out} Suppose $\phi$ is a $C^{1,1,\beta}$ solution to the following equation \[\omega^n_{\phi}=\frac{e^{F}}{|S|^{2-2\beta}}\omega^n_{0}.\]
Suppose $F\in C^{2,\alpha,\beta}$. Then there exists an approximation sequence of $C^{4,\alpha^{\prime}}$ functions $\widehat{\phi}_{\epsilon}$ such that 
\begin{itemize}
\item $|\widehat{\phi}_{\epsilon}|_{\alpha^{\prime}}\leq C_{F}$,
\item $\frac{1}{C_{F}} \omega_{\epsilon}\leq \omega_{\widehat{\phi}_{\epsilon}}\leq C_{F}\omega_{\epsilon}$,
\item $\lim_{\epsilon\rightarrow 0}|\widehat{\phi}_{\epsilon}-\phi|_{\alpha^{\prime}}=0$,
\end{itemize}
 where $C_{F}$ only depends on $|F|_{2,\alpha,\beta}$ and the  data in Definition (\ref{Convention on the constant}). 

\end{thm}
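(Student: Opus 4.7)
The plan is to reduce the problem to a smooth complex Monge-Amp\`ere equation on $M$ whose right-hand side approximates the singular one, solve it by Yau's theorem, and then establish uniform quasi-isometric estimates against the Guenancia-Paun reference metric $\omega_\epsilon$ following their barrier method. First I would regularize $F$: produce a smooth sequence $F_\epsilon$ converging to $F$ in a topology compatible with the $C^{2,\alpha,\beta}$ norm (convolution in a tubular neighborhood of $D$ adapted to the conical chart works), and choose constants $c_\epsilon\to 1$ so that the approximating equation
\[
\omega^n_{\widehat\phi_\epsilon}\;=\;c_\epsilon\,\frac{e^{F_\epsilon}}{(\epsilon+|S|^2)^{1-\beta}}\,\omega_0^n
\]
has matching total volume in the class $[\omega_0]$. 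Since the right-hand side is smooth and strictly positive, Yau's theorem gives a normalized $C^{4,\alpha^\prime}$ solution $\widehat\phi_\epsilon$; normalize by $\int\widehat\phi_\epsilon\,\omega_0^n=\int\phi\,\omega_0^n$.

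The heart of the proof is the quasi-isometric bound $\omega_{\widehat\phi_\epsilon}\sim\omega_\epsilon$. For the lower bound I would run the Chern-Lu inequality for $\operatorname{tr}_{\omega_{\widehat\phi_\epsilon}}\omega_\epsilon$: the dangerous contribution is the bisectional curvature of $\omega_\epsilon$, which blows up like $|S|^{-2}$ near $D$. The Guenancia-Paun trick is precisely to apply the maximum principle not to $\log\operatorname{tr}_{\omega_{\widehat\phi_\epsilon}}\omega_\epsilon$ but to
\[
H\;=\;\log\operatorname{tr}_{\omega_{\widehat\phi_\epsilon}}\omega_\epsilon\;-\;A(\widehat\phi_\epsilon-\Psi_{\epsilon,\rho}),
\]
where $\rho<\beta$ is small and $A$ is large. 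The strict convexity of $\chi_\rho$, combined with the algebraic fact that the negative curvature terms of $\omega_\epsilon$ are dominated by $i\partial\bar\partial\Psi_{\epsilon,\rho}$ when $\rho<\beta$, absorbs the singular term and yields $\operatorname{tr}_{\omega_{\widehat\phi_\epsilon}}\omega_\epsilon\le C_F$, hence $\omega_{\widehat\phi_\epsilon}\ge C_F^{-1}\omega_\epsilon$ at the maximum, with $C_F$ depending only on $|F|_{2,\alpha,\beta}$ through the bound on $\widehat\phi_\epsilon-\Psi_{\epsilon,\rho}$. The reverse bound $\omega_{\widehat\phi_\epsilon}\le C_F\omega_\epsilon$ is then immediate from the Monge-Amp\`ere equation: $\det(\omega_{\widehat\phi_\epsilon}/\omega_\epsilon)$ is uniformly bounded above and below because $e^{F_\epsilon}/(\epsilon+|S|^2)^{1-\beta}$ and $\omega_\epsilon^n/\omega_0^n$ have comparable singular profiles, so the lower eigenvalue bound forces all eigenvalues to be bounded.

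For the $C^0$ and $C^{\alpha^\prime}$ bounds I would first use a Kolodziej-type $L^\infty$ estimate (or directly compare $\widehat\phi_\epsilon$ with $\phi\pm\eta\Psi_{\epsilon,\rho}$ via the maximum principle, exploiting that $\phi\in C^{1,1,\beta}$) to get $|\widehat\phi_\epsilon|_0\le C_F$, and then interpolate with the quasi-isometric bound on $\omega_{\widehat\phi_\epsilon}$ to obtain $|\widehat\phi_\epsilon|_{\alpha^\prime}\le C_F$. Convergence $\widehat\phi_\epsilon\to\phi$ in $C^{\alpha^\prime}$ follows from the Arzel\`a-Ascoli compactness given by this uniform bound together with uniqueness: any subsequential limit is a bounded $\omega_0$-psh solution of the original degenerate Monge-Amp\`ere equation with the same normalization, hence equals $\phi$ (e.g. by the comparison principle for bounded psh solutions, or by Dinew's uniqueness).

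The main obstacle I expect is the calibration of $A$ and $\rho$ in the Guenancia-Paun barrier so that the resulting constant in the quasi-isometry truly depends only on $|F|_{2,\alpha,\beta}$ and not on $\epsilon$. This requires carefully tracking how $c_\epsilon$, $\Psi_{\epsilon,\rho}$, and the curvature defects of $\omega_\epsilon$ enter the maximum-principle computation, and verifying that $\widehat\phi_\epsilon-\Psi_{\epsilon,\rho}$ is uniformly bounded before we know that $\widehat\phi_\epsilon$ is. Once these uniform constants are pinned down, every subsequent step (bidirectional quasi-isometry, $C^{\alpha^\prime}$ bound, and passage to the limit) is essentially formal.
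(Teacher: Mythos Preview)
Your overall architecture matches the paper's: regularize the right-hand side, solve the smooth Monge--Amp\`ere equation by Yau, apply the Guenancia--Paun barrier $\Psi_{\epsilon,\rho}$ to get uniform quasi-isometry with $\omega_\epsilon$, deduce a H\"older bound, and pass to the limit via compactness and uniqueness. Two points, however, are handled differently in the paper, and the first of them is where your sketch is most exposed.

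\textbf{Smoothing $F$.} The paper does \emph{not} regularize $F$ by convolution. Instead it defines $F_\epsilon$ as the solution of the Poisson equation
\[
\Delta_{\omega_\epsilon}F_\epsilon=\Delta_{\omega_D}F+a_\epsilon,
\]
with $a_\epsilon$ chosen so the right side has zero mean. The point of this construction is that it gives, for free, the uniform lower bound $\Delta_{\omega_\epsilon}F_\epsilon\ge -C$ (since $F\in C^{2,\alpha,\beta}$ implies $\Delta_{\omega_D}F$ is bounded). That Laplacian lower bound is exactly the hypothesis that makes the Guenancia--Paun Siu--Bochner computation close up in the elliptic setting; the paper flags this explicitly as ``the advantage'' of this smoothing. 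Your convolution proposal would have to produce the same control on $i\partial\bar\partial F_\epsilon$ relative to the \emph{moving} reference $\omega_\epsilon$, which is not automatic: convolving in a conical chart does not obviously yield uniform Hessian bounds with respect to $\omega_\epsilon$, and you would need to justify this carefully. Relatedly, the paper runs the Siu--Yau direction (bounding $\mathrm{tr}_{\omega_\epsilon}\omega_{\widehat\phi_\epsilon}$), where the needed input is precisely a lower bound on $\Delta_{\omega_\epsilon}$ of the log of the volume ratio; your Chern--Lu formulation instead requires a Ricci lower bound for $\omega_{\widehat\phi_\epsilon}$, which again reduces to an upper bound on $i\partial\bar\partial F_\epsilon$ that you have not secured.

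\textbf{The $C^{\alpha'}$ bound.} ``Interpolate with the quasi-isometric bound'' is not enough: quasi-isometry of $\omega_{\widehat\phi_\epsilon}$ with $\omega_\epsilon$ controls $i\partial\bar\partial\widehat\phi_\epsilon$ only in a degenerate way near $D$ and does not directly yield a uniform H\"older seminorm for $\widehat\phi_\epsilon$ across $D$. The paper obtains $|\widehat\phi_\epsilon|_{\alpha'}\le C_F$ (and also $[F_\epsilon]_{\underline\alpha}\le C$) by invoking a uniform elliptic Harnack/De Giorgi estimate (its Theorem~4.1), proved via a polar-coordinate change that makes $\omega_\epsilon$ uniformly quasi-Euclidean; this, together with Kolodziej's $L^\infty$ estimate, is what actually produces the H\"older bound. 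You should either cite that mechanism or supply an equivalent one.
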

\begin{proof}[Proof of Theorem \ref{smoothing the initial metric out}: ]
We smooth $F$ out by solving the following equation 
 \begin{equation}\label{Smoothing the initial cone metric using elliptic equation}
 \Delta_{\omega_{\epsilon}}F_{\epsilon}=\Delta_{\omega_{D}}F+a_{\epsilon},
 \end{equation}
 $a_{\epsilon}$ is chosen such that \begin{equation}\label{Harnack normalization constant a epsilon}
 \int_{M}(\Delta_{\omega_{D}}F+a_{\epsilon})\omega^n_{\epsilon}=0,
 \end{equation}
 and $F_{\epsilon}$ is normalized so that 
 \begin{equation}\int_{M}\frac{e^{F_{\epsilon}}}{(|S|^2+\epsilon)^{1-\beta}}\omega^n_{0}=1.
 \end{equation}
 Notice that (\ref{Harnack normalization constant a epsilon}) directly implies $\lim_{\epsilon \rightarrow}a_{\epsilon}=0$.
 Using the $\epsilon$-independent bounds on the global Sobolev and Poincare constants in Remark \ref{Sobolev and Poincare constant bounds}, we deduce the follow $L^{\infty}$ bound via Moser's iteration.
  \begin{equation}\label{equ Linfty bounds of Fepsilon}
 |F_{\epsilon}|_{L^{\infty}}\leq C.
 \end{equation}
 Using (\ref{equ Linfty bounds of Fepsilon}) and  the elliptic Harnack-inequality in Theorem \ref{Harnack inequality elliptic},  we estimate 
 \begin{equation}
 [F_{\epsilon}]_{\underline{\alpha}}\leq C.
 \end{equation}
 Therefore,  $F_{\epsilon}$ subconverge to some $F_{\infty}$ in $C^{\alpha^{\prime}}$, $\alpha^{\prime}<\underline{\alpha}$. Moreover 
 \begin{equation}
  \Delta_{\omega}F_{\infty}=\Delta_{\omega}F\ \textrm{over}\ M\setminus D.
\end{equation}   
Since $F_{\infty}\in C^{\alpha^{\prime}}$, then by Jeffres' trick in \cite{Jeffres}, we have 
$F_{\infty}=F$. The advantage of smoothing $F$ using equation (\ref{Smoothing the initial cone metric using elliptic equation}) is that by    Guenancia-Paun, the condition  \begin{equation} \label{Lower bound of the Laplacian of perturbed F}
\Delta_{\omega_{\epsilon}}F_{\epsilon}\geq -C
\end{equation}
 gives us the Laplacian estimate for the smoothing of the initial metric. Namely,  we smooth $\omega_{\phi(0)}$ by considering the following equation
\begin{equation}\label{Perturbed Elliptic equation}\omega_{\widehat{\phi}_{\epsilon}}^n=\frac{e^{F_{\epsilon}}}{(|S|^2+\epsilon)^{1-\beta}}\omega^n_{0},\ \sup_{M}\widehat{\phi}_{\epsilon}=0.
\end{equation}
By Yau, equation (\ref{Perturbed Elliptic equation}) admits a solution $\widehat{\phi}_{\epsilon}\in C^{4,\alpha^{\prime}}$. 
By the work of Guenancia-Paun in section 5.2 of \cite{GP}, Kolodziej's 
$L^{\infty}-$ estimate in \cite{Kolodziej} (also see Theorem 1.1 in \cite{DinewZhang} for a general statement), Theorem \ref{Harnack inequality elliptic},  and the condition (\ref{Lower bound of the Laplacian of perturbed F}), we obtain
\begin{eqnarray}
& &|\widehat{\phi}_{\epsilon}|_{\alpha^{\prime}}\leq C_{F}
\\& &\frac{1}{C_F} \omega_{\epsilon}\leq \omega_{\widehat{\phi}_{\epsilon}}\leq C_F\omega_{\epsilon}.
\end{eqnarray}
The proof is thus completed. 
\end{proof}
\section{Construction of the approximation flows and proofs of Theorem \ref{Existence in the weak flow0}. \label{Construction of the approximation flows}}
To construct the weak flow and approximate the CKRF, we first apply Theorem \ref{smoothing the initial metric out} to perturb the initial cone metric to $\omega_{\widehat{\phi}_{\epsilon}}$. Then we  consider $\omega_{\widehat{\phi}_{\epsilon}}$  as our new initial metric and consider  the following  approximation flows.
\begin{equation}\label{Perturbed CKRF}
 \left \{
\begin{array}{ccr}
 & \frac{\partial \bar{\phi}_{\epsilon}}{\partial t}=\log\frac{\omega_{\bar{\phi}_{\epsilon}}^n}{\omega_0^n}+\beta\bar{\phi}_{\epsilon}+h+(1-\beta)\log(|S|^2+\epsilon),\ t\in[0,T].\\
   & \bar{\phi}_{\epsilon}(0)= \widehat{\phi}_{\epsilon} \ \textrm{when} \  t=0.\ \\
\end{array} \right.
\end{equation}
Now we would like to change the reference metric to $\omega_{\epsilon}$. Writing \[
\widehat{V}_{\epsilon}=h+\log\frac{\omega^n_{\epsilon}(|S|^2+\epsilon)^{1-\beta}}{\omega^n_{0}}+\frac{\beta}{N}\Psi_{\epsilon,\beta},\] we change the flow equation to the following. 
\begin{equation}\label{Perturbed CKRF before rescaling}
 \left \{
\begin{array}{ccr}
 & \frac{\partial \phi_{\epsilon}}{\partial t}=\log\frac{\omega^n_{\phi_{\epsilon}}}{\omega^n_{\epsilon}}+\beta\phi_{\epsilon}+\widehat{V}_{\epsilon},\ t\in[0,T]. \\
   & \phi_{\epsilon}(0)= \widehat{\phi}_{\epsilon}-\frac{\beta}{N}\Psi_{\epsilon,\beta} \ \textrm{when} \  t=0.\\
\end{array} \right.
\end{equation}

\begin{lem}\label{Parabolic Laplacian estimate before rescaling}There exists a constant $C$ in the sense of Definition \ref{Convention on the constant} with the following properties. On the perturbed flow (\ref{Perturbed CKRF before rescaling}), the following estimates  hold. 
\begin{itemize}
\item $|\phi_{\epsilon}|_{\alpha,\frac{\alpha}{2}}\leq C$,
\item $\frac{1}{C} \omega_{\epsilon}\leq \omega_{\phi_{\epsilon}}\leq C\omega_{\epsilon}$, $|\frac{\partial \phi_{\epsilon}}{\partial t}|_{L^{\infty}}\leq C$.
\end{itemize}
\end{lem}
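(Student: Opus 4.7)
The plan is to establish $\epsilon$-independent a priori estimates for the flow (\ref{Perturbed CKRF before rescaling}) by combining parabolic maximum principle arguments with a parabolic analog of the Laplacian estimate of Guenancia--Paun. The starting point is the structural fact, proved in section 3 of \cite{GP}, that the reference metric $\omega_\epsilon$ has Ricci curvature bounded from below uniformly in $\epsilon$ and that any singular bisectional-curvature contribution near $D$ can be absorbed pointwise by an appropriate multiple of $\Psi_{\epsilon,\rho}$. Moreover, by Theorem \ref{smoothing the initial metric out}, the initial datum $\phi_\epsilon(0)=\widehat{\phi}_{\epsilon}-\frac{\beta}{N}\Psi_{\epsilon,\beta}$ is uniformly $C^0$-bounded and the initial metric $\omega_{\phi_\epsilon(0)}$ is uniformly quasi-isometric to $\omega_\epsilon$.

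First I would derive the $C^0$ bounds on $\phi_\epsilon$ and $\dot\phi_\epsilon$. Applying the maximum principle to $e^{-\beta t}\phi_\epsilon$ and using that $\omega_{\phi_\epsilon}^n\le\omega_\epsilon^n$ at a spatial maximum, together with the uniform bound on $\widehat{V}_\epsilon$ and on $\phi_\epsilon(0)$, yields $|\phi_\epsilon|_{L^\infty}\le C$ on $[0,T]$. Differentiating the flow in time shows that $\dot\phi_\epsilon$ satisfies the linear parabolic equation $(\partial_t-\Delta_{\omega_{\phi_\epsilon}})\dot\phi_\epsilon=\beta\dot\phi_\epsilon$, so the maximum principle together with the initial value $\dot\phi_\epsilon(0)=\log(\omega^n_{\widehat{\phi}_\epsilon}/\omega^n_\epsilon)+\beta\phi_\epsilon(0)+\widehat{V}_\epsilon$, uniformly bounded by the quasi-isometry from Theorem \ref{smoothing the initial metric out}, gives $|\dot\phi_\epsilon|_{L^\infty}\le C$.

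The main step is the parabolic Laplacian estimate. Applying the Chern--Lu type inequality to $u:=\log\mathrm{tr}_{\omega_\epsilon}\omega_{\phi_\epsilon}$ along the flow produces a pointwise inequality of the schematic form
\[
(\partial_t-\Delta_{\omega_{\phi_\epsilon}})u \;\le\; C + C\bigl(-\inf\mathrm{Bisec}(\omega_\epsilon)\bigr)\mathrm{tr}_{\omega_{\phi_\epsilon}}\omega_\epsilon,
\]
and the bisectional curvature of $\omega_\epsilon$ is not bounded below uniformly in $\epsilon$ near $D$. Following the global barrier construction in section 4 of \cite{GP}, I would consider $v:=u-A\Psi_{\epsilon,\rho}-B\phi_\epsilon$ for $\rho\ll\beta$ and suitably large $A,B$. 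At an interior space-time maximum of $v$, the $-A\Psi_{\epsilon,\rho}$ term absorbs the singular part of the curvature of $\omega_\epsilon$ pointwise, the $-B\phi_\epsilon$ term contributes a good $B\mathrm{tr}_{\omega_{\phi_\epsilon}}(\omega_\epsilon-\omega_{\phi_\epsilon})$ that dominates the remaining bad terms once $B$ is large enough, and the time derivative $\partial_t u\ge B\dot\phi_\epsilon$ is already controlled by the previous step. The parabolic maximum principle then yields $\mathrm{tr}_{\omega_\epsilon}\omega_{\phi_\epsilon}\le C$ uniformly on $M\times[0,T]$, and the matching lower bound follows from the volume-form identity $\omega^n_{\phi_\epsilon}/\omega^n_\epsilon=\exp(\dot\phi_\epsilon-\beta\phi_\epsilon-\widehat{V}_\epsilon)$ combined with the arithmetic--geometric mean inequality.

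The $C^{\alpha,\alpha/2}$ Hölder bound on $\phi_\epsilon$ then follows by regarding $\phi_\epsilon$ as a solution to a uniformly parabolic linear equation with bounded right-hand side and invoking the parabolic analog of Theorem \ref{Harnack inequality elliptic}; the $\epsilon$-independence of the Sobolev and Poincaré constants of $\omega_\epsilon$ is what makes these bounds uniform. The principal obstacle is the Laplacian estimate itself: one must verify that the Guenancia--Paun barrier argument passes from the elliptic complex Monge--Ampère setting to the parabolic setting, which amounts to checking that the extra $\partial_t$ contribution cooperates with, rather than obstructs, the maximum-principle argument, and that the initial time $t=0$ imposes no additional boundary contribution because $v(0)$ is uniformly bounded by the quasi-isometry of Theorem \ref{smoothing the initial metric out}.
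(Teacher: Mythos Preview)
Your proposal is correct and follows essentially the same route as the paper: bound $\dot\phi_\epsilon$ by the maximum principle using the uniform control on the initial data from Theorem~\ref{smoothing the initial metric out}, run the parabolic Siu--Bochner (Chern--Lu) inequality with Guenancia--Paun's auxiliary function $\Psi_{\epsilon,\rho}$ and a subtracted multiple of $\phi_\epsilon$, use the volume-form identity together with the eigenvalue inequality to pass from $\mathrm{tr}_{\omega_{\phi_\epsilon}}\omega_\epsilon\le C$ to the full quasi-isometry, and then invoke the Harnack estimate for the H\"older bound.

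One small slip: with the paper's normalization (equation~(\ref{GP subharmonicity of Psi})) one has $\Delta_{\omega_{\phi_\epsilon}}\Psi_{\epsilon,\rho}\ge -C\,\mathrm{tr}_{\omega_{\phi_\epsilon}}\omega_\epsilon + C\,[\text{singular}]$, so the test function should be $u+A\Psi_{\epsilon,\rho}-B\phi_\epsilon$ rather than $u-A\Psi_{\epsilon,\rho}-B\phi_\epsilon$; with $-A\Psi$ the contribution $(\partial_t-\Delta)(-A\Psi)=A\Delta\Psi$ only admits a \emph{lower} bound and cannot be combined with the upper bound on $(\partial_t-\Delta)u$ to kill the singular curvature term. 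This is exactly the sign the paper uses in (\ref{Siu Bochner consequence}). Apart from this sign, and the cosmetic point that the paper obtains $|\phi_\epsilon|_{L^\infty}$ by integrating $\dot\phi_\epsilon$ rather than by a separate maximum-principle argument, your outline and the paper's proof coincide.
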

\begin{proof}[ Proof of Lemma \ref{Parabolic Laplacian estimate before rescaling}:]

Step1: First we show that  $|\frac{\partial \phi_{\epsilon}}{\partial t}|_{L^{\infty}}\leq C$. This is directly implied by the maximal principle and the bound on $|\frac{\partial \phi_{\epsilon}}{\partial t}|_{t=0}$. The bound 
on $|\frac{\partial \phi_{\epsilon}}{\partial t}|_{t=0}$ directly follows from the properties of our approximating intial metrics. Namely from (\ref{Perturbed Elliptic equation}) we have
\begin{eqnarray}
\nonumber & &\frac{\partial \phi_{\epsilon}}{\partial t}|_{t=0}
\nonumber \\&=&\log\frac{\omega^n_{\phi_{\epsilon}}}{\omega^n_{\epsilon}}(0)+\beta\phi_{\epsilon}(0)+\widehat{V}_{\epsilon}(0)
\nonumber\\&=&\log \frac{e^{F_{\epsilon}}\omega^n_{0}}{(|S|^2+\epsilon)^{1-\beta}\omega^n_{\epsilon}}(0)
+\beta\phi_{\epsilon}(0)+\widehat{V}_{\epsilon}(0).
\end{eqnarray}
Thus by Theorem \ref{smoothing the initial metric out}  we obtain \begin{equation*}|\frac{\partial \phi_{\epsilon}}{\partial t}|_{0,M,t=0}\leq C.\end{equation*} Therefore by maximal principle we have  \begin{equation}\label{Appro res: Bound on the dphidt of the perturbed rescaled flow}
|\frac{\partial \phi_{\epsilon}}{\partial t}|_{0,M}\leq C.
\end{equation}\\

Step 2. Now we turn to the spacewise second order  estimate. By the Siu Bochner technique in \cite{Siu}(the reader could also see \cite{GP}) and the flow equation (\ref{Perturbed CKRF before rescaling}), denote \begin{equation*}h_{\epsilon}=-\beta\phi_{\epsilon}-\widehat{V}_{\epsilon},
\end{equation*}  we  derive the following parabolic Siu-Bochner formula.
 \begin{eqnarray}\label{Parabolic Siu Bochner technique}
 & &(\Delta_{\phi_{\epsilon}}-\frac{\partial }{\partial t})\log {tr}_{\omega_{\epsilon}}\omega_{\phi_{\epsilon}}\nonumber
\\&\geq &\frac{1}{tr_{\omega_{\epsilon}}\omega_{\phi_{\epsilon}}} 
\{\Delta_{\omega_{\epsilon}}h_{\epsilon}+\Sigma_{i\leq l}(\frac{\lambda_i}{\lambda_{l}}+\frac{\lambda_l}{\lambda_{i}}-2)R_{i\bar{i}l\bar{l}}(w)\},
 \end{eqnarray}
 where $w$ is the geodesic coordinates of $\omega_{\epsilon}$ which diagonalize $\omega_{\phi_{\epsilon}}$ with respect to $\omega_{\epsilon}$, and $\lambda_i$ are the eigenvalues of 
 $\omega_{\phi_{\epsilon}}$ with respect to $\omega_{\epsilon}$.
 
 We then consider  the barrier function $\Psi_{\epsilon,\rho}$ (for sufficiently small $\rho$) of Guenancia and Paun in \cite{GP}. Namely, for the sake of a self-contained proof, we quote in the following two beautiful identities from \cite{GP}  at any point $p$ near  $D$ (which do not depend on the  flow).
 \begin{itemize}
 \item Equation ($\star$) in page 8 of  \cite{GP}:
 \begin{equation}\label{GP subharmonicity of Psi}
 \Delta_{\omega_{\phi_{\epsilon}}}\Psi_{\epsilon,\rho}\geq -Ctr_{\omega_{\phi_{\epsilon}}}\omega_{\epsilon}+C\Sigma_{i=1}^{n}\frac{1}{(|S|^2+\epsilon)^{1-\rho}}|\frac{\partial z}{\partial w_{i}}|^2\frac{1}{\lambda_i}.
 \end{equation}
  \item Curvature estimate  in page 8 of \cite{GP}:
 \begin{eqnarray}\label{GP the lower bound on the curvature}
 & &\frac{1}{\Sigma_{k}\lambda_{k}}
\{\Delta_{\omega_{\epsilon}}h_{\epsilon}+\Sigma_{i\leq l}(\frac{\lambda_i}{\lambda_{l}}+\frac{\lambda_l}{\lambda_{i}}-2)R_{i\bar{i}l\bar{l}}(w)\}\nonumber
\\&\geq & -C\Sigma_{i=1}^{n}\frac{1}{(|S|^2+\epsilon)^{1-\rho}}|\frac{\partial z}{\partial w_{i}}|^2\frac{1}{\lambda_i} \nonumber
\\&-&\frac{1}{\Sigma_{k}\lambda_{k}}\{\Sigma_{i\leq l}(\frac{\lambda_i}{\lambda_{l}}+\frac{\lambda_l}{\lambda_{i}})\}-C.
 \end{eqnarray}
 \end{itemize}
 
 Then, (\ref{Parabolic Siu Bochner technique}), (\ref{GP subharmonicity of Psi}), and (\ref{GP the lower bound on the curvature}) imply the following estimate for sufficiently big numbers $A$ and $B$ over the whole $M$.
 \begin{eqnarray}\label{Siu Bochner consequence}
 & &(\Delta_{\phi_{\epsilon}}-\frac{\partial }{\partial t})\{\log {tr}_{\omega_{\epsilon}}\omega_{\phi_{\epsilon}}+B\Psi_{\epsilon,\rho}-A\phi_{\epsilon}\}\nonumber
\\&\geq & {tr}_{\omega_{\phi_{\epsilon}}}\omega_{\epsilon}+A \frac{\partial \phi_{\epsilon}}{\partial t}-C.
 \end{eqnarray}
 
 (\ref{Appro res: Bound on the dphidt of the perturbed rescaled flow}) and (\ref{Siu Bochner consequence}) indicate that,  at the interior maximum of 
 \begin{equation}\label{eq Bound of the laplacian at the maximum of Siu quantity }\log {tr}_{\omega_{\epsilon}}\omega_{\phi_{\epsilon}}+B\Psi_{\epsilon,\rho}-A\phi_{\epsilon},\  \textrm{we have}\ 
 {tr}_{\omega_{\phi_{\epsilon}}}\omega_{\epsilon}\leq C.
 \end{equation}
 In terms of the eigenvalues, we have
 \begin{equation}\label{Inverse trace bound}\Sigma_{i}\frac{1}{\lambda_i}\leq C.
 \end{equation}
 By (\ref{Appro res: Bound on the dphidt of the perturbed rescaled flow}) we have  \begin{equation}\label{DET bound}
 \frac{1}{C}\leq \prod_{k}\lambda_k\leq C.
 \end{equation} Hence
 \begin{eqnarray}\label{Bounding trace by inverse trace}
 & &(\Sigma_{i}\frac{1}{\lambda_i})^{n-1}\nonumber
 \\&\geq & \Sigma_{i}\frac{1}{\lambda_1..\widehat{\lambda_{i}},...\lambda_n}
 = \frac{\Sigma_{i}\lambda_i}{\prod_{k}\lambda_k}
 \geq C\Sigma_{i}\lambda_i.
 \end{eqnarray}
 Combining (\ref{Bounding trace by inverse trace}), (\ref{eq Bound of the laplacian at the maximum of Siu quantity }), and (\ref{Inverse trace bound}), we end up with 
 \begin{equation}
 \sup {tr}_{\omega_{\epsilon}}\omega_{\phi_{\epsilon}}\leq C.
 \end{equation}
 Using (\ref{DET bound}) again, we get $\sup {tr}_{\omega_{\phi_{\epsilon}}}\omega_{\epsilon}<C$.\\
 
 Step 3: In Step 2 we assume that  ${tr}_{\omega_{\epsilon}}\omega_{\phi_{\epsilon}}$ attains interior maximum. On the other hand, suppose ${tr}_{\omega_{\epsilon}}\omega_{\phi_{\epsilon}}$ attain maximum when $t=0$, then the second item in Theorem \ref{smoothing the initial metric out}  implies our desired bound. Thus item 2 in Lemma \ref{Parabolic Laplacian estimate before rescaling} is proved. 
 
 Step 4: To prove Item 1, it suffices to use item 2 and the Harnack inequality in Theorem \ref{Harnack inequality elliptic}. Notice that we automatically have the $C^{0}$-estimate via the bound on $|\frac{\partial \phi_{\epsilon}}{\partial t}|_{0}$ and the $C^{0}$-bound on the initial potential $\phi_{\epsilon}(0)$.
 
 The proof is complete.

\end{proof}

Using Lemma \ref{Parabolic Laplacian estimate before rescaling} and Theorem \ref{smoothing the initial metric out}, $\phi_{\epsilon}$ sub converges to a
$\phi_{\infty}$ such that 
\begin{displaymath}\left \{
\begin{array}{ccr}
 & \frac{\partial \phi_{\infty}}{\partial t}=\log\frac{\omega_{\phi_{\infty}}^n}{\omega_0^n}+\beta\phi_{\infty}+h+(1-\beta)\log(|S|^2),\  t\in[0,T].\\
   & \phi_{\infty}(0)= \phi(0) \ \textrm{when} \  t=0.\\\
   & \phi_{\infty}\in C^{\alpha,\frac{\alpha}{2}}[0,T];\ \frac{1}{C} \omega_{D}\leq \omega_{\phi_{\infty}}\leq C\omega_{D}.\\
\end{array} \right.
\end{displaymath}

To show that the perturbation really converges back to the original conical flow, we need the following lemma on the uniqueness of the weak conical flows. 
\begin{lem}\label{uniqueness of weak flows}Suppose $\phi_i,i=1,2$ are two weak conical flows :
\begin{displaymath}\left \{
\begin{array}{ccr}
 & \frac{\partial \phi_{i}}{\partial t}=\log\frac{\omega_{\phi_{i}}^n}{\omega_0^n}+\beta\phi_{i}+h+(1-\beta)\log(|S|^2)\ \textrm{over}\ M\setminus D,\\
   & \phi_{i}(0)= \phi(0) \ \textrm{when} \  t=0,\\
   & \phi_{i}\in C^{\alpha,\frac{\alpha}{2}}(M\times [0,T]).\\
   & \frac{1}{C} \omega_{D}\leq \omega_{\phi_{i}}\leq C\omega_{D}, \ |\frac{\partial \phi_{i}}{\partial t}|_{L^{\infty}}\leq C\ \textrm{over}\ M\setminus D.
\end{array} \right.
\end{displaymath}

Then $\phi_1=\phi_2$.

\end{lem}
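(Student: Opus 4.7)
The plan is to run a comparison/maximum-principle argument on the difference $u=\phi_1-\phi_2$. Subtracting the two flow equations, the logarithm of the quotient of volume forms can be linearised by the fundamental theorem of calculus applied to $\log\det$. Setting $\omega_s=(1-s)\omega_{\phi_2}+s\omega_{\phi_1}$ for $s\in[0,1]$, one obtains
\begin{equation*}
\log\frac{\omega_{\phi_1}^n}{\omega_{\phi_2}^n} = \int_0^1 tr_{\omega_s}(i\partial\bar\partial u)\,ds =: L u,
\end{equation*}
so that $\partial_t u = Lu + \beta u$ on $(M\setminus D)\times[0,T]$. The operator $L$ is linear, of second order, and elliptic: because each $\omega_{\phi_i}$ is quasi-isometric to $\omega_D$ by hypothesis, each $\omega_s$ is too, and hence the coefficients of $L$ are measurable functions on $M\setminus D$ that are uniformly comparable to those of $\Delta_{\omega_D}$. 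Substituting $v=e^{-\beta t}u$ kills the zeroth-order term and gives $\partial_t v = Lv$ on $(M\setminus D)\times[0,T]$, with $v(\cdot,0)=0$.

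The main technical issue is that $M\setminus D$ is not compact, so one cannot apply the maximum principle directly; the fix is to use $\log|S|^2$ as a barrier, following the same philosophy as the $\Psi_{\epsilon,\rho}$ of Guenancia--Paun. The crucial observation is that $i\partial\bar\partial\log|S|^2=-\Theta_h$ is a \emph{smooth} globally defined $(1,1)$-form on $M$, and therefore $tr_{\omega_s}(i\partial\bar\partial\log|S|^2)$ is bounded on $M\setminus D$ uniformly in $s$, because the inverse of $\omega_D$ in the normal direction to $D$ is of order $|z|^{2-2\beta}$ and hence bounded. So there exists $C_0$ with $|L(\log|S|^2)|\leq C_0$ on $M\setminus D\times[0,T]$, and the function
\begin{equation*}
\tilde v := v + \eta\log|S|^2 - C_0\eta\, t
\end{equation*}
is a subsolution: $\partial_t\tilde v - L\tilde v\leq 0$ on $(M\setminus D)\times[0,T]$.

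Now I apply the classical parabolic maximum principle on the compact domain $\{|S|\geq\delta\}\times[0,T]$. Since $v\in C^{\alpha,\alpha/2}(M\times[0,T])$ it is uniformly bounded, while $\tilde v\to -\infty$ on $\{|S|=\delta\}$ as $\delta\to 0$ uniformly in $t$. Consequently, for $\delta$ sufficiently small the maximum of $\tilde v$ over $\{|S|\geq\delta\}\times[0,T]$ is attained on the initial slice $\{t=0\}$, where it equals $\eta\log|S|^2\leq\eta\log(\sup_M|S|^2)$. Letting $\delta\to 0$ first and then $\eta\to 0$ gives $v\leq 0$ on $M\setminus D$, and, by continuity of $v$ on $M$, on all of $M$. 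The symmetric barrier argument applied to $-v$ yields $v\geq 0$, hence $v\equiv 0$ and $\phi_1\equiv\phi_2$.

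The main obstacle is precisely justifying that the barrier $\log|S|^2$ does what we want: one needs the smoothness of $\Theta_h$ together with uniform equivalence of $\omega_{\phi_i}$ to the model cone metric $\omega_D$ to conclude that $L(\log|S|^2)$ is bounded independently of $\epsilon$ and $t$; everything else is a fairly standard linearisation plus maximum principle. A minor check is the interior regularity of $u$ on $M\setminus D$ needed to apply the classical max principle pointwise, which follows from standard parabolic Schauder estimates applied to the non-degenerate Monge--Amp\`ere equation once one is away from $D$, using the bounds $|\partial_t\phi_i|_0+|tr_{\omega_D}\omega_{\phi_i}|_0+|tr_{\omega_{\phi_i}}\omega_D|_0\leq C$ given in the hypothesis.
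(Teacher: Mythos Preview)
Your proof is correct and follows the same overall strategy as the paper --- linearise the difference of the two flows, control the resulting operator using the quasi-isometry $\frac{1}{C}\omega_D\le\omega_{\phi_i}\le C\omega_D$, and then run a maximum principle with a barrier that handles the non-compactness of $M\setminus D$. The difference lies in the choice of barrier. The paper uses Jeffres' trick: it adds $a|S|^{2p}$ (with $p$ small) to $\phi_1$, so that the spatial maximum of the perturbed difference is forced away from $D$ by a H\"older-exponent comparison, and then one works directly on $M$ with a forward-difference-quotient Gronwall argument. You instead add $\eta\log|S|^2$, which tends to $-\infty$ along $D$, and apply the maximum principle on the truncated domains $\{|S|\ge\delta\}\times[0,T]$, sending $\delta\to 0$ and then $\eta\to 0$. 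Both arguments hinge on exactly the same analytic fact, namely that $i\partial\bar\partial\log|S|^2=-\Theta_h$ away from $D$ is smooth and the inverse of $\omega_D$ has bounded coefficients in standard coordinates, so the linearised Laplacian of the barrier is uniformly bounded; this is precisely the computation in the paper's inequalities (\ref{formula of i ddbar of norm of S})--(\ref{Laplacian of norm of S is bounded from below}). Your version has the minor advantage that you do not need to match H\"older exponents to locate the maximum, and the preliminary substitution $v=e^{-\beta t}u$ removes the zeroth-order term so no Gronwall step is needed; the paper's version has the minor advantage of avoiding a domain exhaustion. A small wording point: $i\partial\bar\partial\log|S|^2$ is equal to $-\Theta_h$ only on $M\setminus D$ (on all of $M$ there is the additional current $2\pi[D]$), but since your computation lives on $M\setminus D$ this is harmless.
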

\begin{proof}
We again employ Jeffres' trick in the parabolic case, adapted to our setting. Consider $\widehat{\phi}_1=\phi_1+a|S|^{2p}$. Then we compute
\begin{eqnarray}\label{Appro res: difference of two solutions via Jeffres perturbation}
\frac{\partial \widehat{\phi}_1}{\partial t}&=&\log\frac{(\omega+i\partial \bar{\partial}\phi_1)^{n}}{\omega_0^n}\nonumber
+\beta\widehat{\phi}_{1}-a\beta |S|^{2p}+h+(1-\beta)\log(|S|^2).
\end{eqnarray}
Denote $v=\widehat{\phi}_1-\phi_2$ and $\underline{\Delta}=\int_{0}^{1}g^{i\bar{j}}_{b\phi_1+(1-b)\phi_2}
\frac{\partial^{2}}{\partial z_i\partial\bar{z_j} }db$, we compute from 
(\ref{Appro res: difference of two solutions via Jeffres perturbation}) that  
\begin{eqnarray}\label{equation of the difference of two weak flows}
\frac{\partial v}{\partial t}=\underline{\Delta}v-a\underline{\Delta}|S|^{2p}
+\beta v-a\beta |S|^{2p}.
\end{eqnarray}
The following is due to Jeffres \cite{Jeffres}.
\begin{clm}\label{clm supremum attained away from D} When $p<\frac{\alpha \beta}{2}$, the spacewise maximum of $\widehat{\phi}_1$ and $v$ are attained away from $D$.
\end{clm}
 We only prove it for $\widehat{\phi}_{1}$, the others are similar. Was this claim not true,   let  the spacewise maximum of $\widehat{\phi}_{1}$ be attained at $q\in D$, near $q$ we have a holomorphic chart such that $q$ corresponds to the origin $0$, and $|S|^{2}=h |z|^{2}$, $h$ is the metric of the line bundle of $D$ ( $\frac{1}{C}\leq h\leq C$). Suppose in this chart we have
 \begin{equation}\label{equ phi1 attain max on D}
 \phi_{1}(z,0)+\epsilon |S|^{2p}(z,0)-\phi_{1}(0,0)\leq 0.
\end{equation}  
 
 Since $\phi_{1}\in C^{\alpha}$ spacewisely, we compute, 
 \begin{eqnarray*}
& & \frac{\phi_{1}(z,0)+\epsilon |S|^{2p}(z,0)-\phi_{1}(0,0)}{|z|^{\alpha}}=\frac{\phi_{1}(z,0)-\phi_{1}(0,0)}{|z|^{\alpha}}
+\frac{\epsilon h^{p} |z|^{2p}}{|z|^{\alpha}}
\\&\geq & -[\phi_{1}]_{\alpha}+\epsilon h^{p} |z|^{-(\alpha-2p)}
\\& \geq & 1 \ \textrm{when}\ z\ \textrm{is sufficiently close to}\ 0,\ p<\frac{\alpha\beta}{2}. 
 \end{eqnarray*}
  This contradicts (\ref{equ phi1 attain max on D}). The proof of Claim \ref{clm supremum attained away from D} is complete. Actually it sufficies to require $2p<\alpha$. The reason of requiring the stronger condition $2p<\alpha\beta$ is that it even works more generally for $\phi_{1}\in C^{,\alpha,\beta}$ (the intrinsic H\"older space of $\omega_{\beta}$), thus we can avoid any confusion.

 Furthermore, we have 
\begin{eqnarray}\label{formula of i ddbar of norm of S}
& &i\partial \bar{\partial}|S|^{2p}\nonumber
\\&=& p^2|S|^{2p}\partial \log |S|^2\wedge \bar{\partial }\log |S|^2+
p|S|^{2p}i\partial \bar{\partial}\log |S|^2.
\end{eqnarray}
By the second-order estimate in the assumptions, we have the following estimate.
\begin{equation}\label{eq bound on g inverse}
|g^{i\bar{j}}_{b\phi_1+(1-b)\phi_2}|\leq C,\ \textrm{where the basis is}\ (\frac{\partial}{\partial z_i},\ i=1...n).
\end{equation}
Then away from the divisor, $g^{i\bar{j}}_{b\phi_1+(1-b)\phi_2}$ is at least $C^{\alpha}$. From (\ref{formula of i ddbar of norm of S}) we compute over $M\setminus D$ that
\begin{eqnarray}\label{Laplacian of norm of S is bounded from below}
\nonumber& &\underline{\Delta}|S|^{2p}
\nonumber\\&\geq & 
p|S|^{2p}g^{i\bar{j}}_{b\phi_1+(1-b)\phi_2}
\frac{\partial^{2}\log |S|^2}{\partial z_i\partial\bar{z_j} }
\nonumber \\&\geq & -C|pg^{i\bar{j}}_{b\phi_1+(1-b)\phi_2}
\Theta_{h,i\bar{j}}|
\nonumber\\&\geq & -C,
\end{eqnarray}
where $\Theta_{h}$ is the smooth curvature form of $(L,h)$.

Then, by (\ref{equation of the difference of two weak flows}) and Proposition 2.23 in \cite{MorganTian},     we deduce
\begin{equation}\frac{\partial \sup v}{\partial t}\leq aC
+\beta \sup v,\end{equation}
in the sense of forward difference quotients.
Using $v(0)=a|S|^{2p}\leq aC$ and Proposition 2.23 in \cite{MorganTian}again, we obtain 
\begin{equation}\sup v\leq [\sup v(0)+aC]e^{\beta t}-aC\leq aCe^{\beta T}.
\end{equation}
Thus  let $a$ tend to 0,  we end up with $\phi_1\leq \phi_2$. By the same reason we have 
$\phi_2\leq \phi_1$, then $\phi_1= \phi_2$.
\end{proof}
\begin{proof}[ Proof of Theorem \ref{Existence in the weak flow0}:]
By letting $\epsilon\rightarrow 0$ in Lemma \ref{Parabolic Laplacian estimate before rescaling} and flow (\ref{Perturbed CKRF}), notice that our time $T$ can be arbitrarily large, then
Theorem \ref{Existence in the weak flow0} is a direct corollary of 
Lemma \ref{Parabolic Laplacian estimate before rescaling} and \ref{uniqueness of weak flows}.

\end{proof}
\section{Local Harnack inequality.\label{section Local harnack inequality}}
  In this section we prove Theorem \ref{Harnack inequality elliptic} by proving the  harder  Theorem \ref{Harnack inequality parabolic}. Theorem \ref{Harnack inequality elliptic} is all we need to prove the results in the introduction. 
 
    \begin{thm}\label{Harnack inequality elliptic}
Suppose  $\omega$ is a weak conical-K\"ahler metric and  $ \frac{\omega_{D}}{C}\leq \omega\leq C\omega_{D}$, or 
$\omega$ is $C^{\alpha}$ over the whole $M$ (across $D$) in the usual sense and 
$\frac{\omega_{\epsilon}}{C}\leq \omega\leq C\omega_{\epsilon}$ for some $0<\epsilon\leq 1$.

 Suppose  $u$ is  a bounded weak solution to  
\[\Delta_{\omega}u
=f\ \textrm{over}\ M,\]
then there exists a $\alpha^{\prime}>0$ such that 
\[[u]_{\alpha^{\prime},M}\leq C(|u|_{0,M}+|f|_{0,M}).\]
\end{thm}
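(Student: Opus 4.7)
The overall strategy is to reduce the H\"older estimate to the classical De Giorgi--Nash--Moser theorem for weak solutions of uniformly elliptic divergence-form equations, carried out in coordinates adapted to the cone model. Away from $D$ the hypothesis places $\omega$ uniformly quasi-isometric to a smooth background K\"ahler metric, so $\Delta_\omega u = f$ is a uniformly elliptic equation with bounded measurable coefficients and the interior H\"older estimate is classical. Hence only a neighborhood of each point $p\in D$ requires genuine work.

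Near $p\in D$, I would fix a holomorphic chart $(z,u_1,\dots,u_{n-1})$ in which both $\omega_D$ and $\omega_\epsilon$ are uniformly (in $\epsilon$) quasi-isometric to the standard model
\[
g_\beta \;=\; \beta^{2}|z|^{2\beta-2}\tfrac{i}{2}dz\wedge d\bar z + \sum_{j=1}^{n-1}\tfrac{i}{2}du_j\wedge d\bar u_j,
\]
the second quasi-isometry being a key feature of the construction in \cite{GP}. Consequently $\omega$ itself is uniformly quasi-isometric to $g_\beta$. The branched change of coordinates $w = z^\beta$ converts $g_\beta$ into the flat metric $|dw|^2 + \sum|du_j|^2$ on the product of the two-dimensional cone of total angle $2\pi\beta$ with $\C^{n-1}$, in which $\Delta_\omega u = f$ takes the divergence form
\[
\p_i\bigl(a^{ij}(w,u)\,\p_j u\bigr) \;=\; F,
\]
with $(a^{ij})$ bounded measurable, positive-definite and uniformly elliptic, and $|F|_\infty \leq C|f|_\infty$, where all the constants depend only on $n,\beta$ and on the fixed quasi-isometry constant between $\omega$ and $g_\beta$ --- in particular, independently of $\epsilon$ in the approximation case.

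The two-dimensional cone of angle $2\pi\beta<2\pi$ is a nonnegatively-curved Alexandrov space, so its product with $\C^{n-1}$ enjoys uniform volume doubling and an $L^2$-Poincar\'e inequality on metric balls. Standard Moser iteration / De Giorgi oscillation-decay then produces an exponent $\alpha''=\alpha''(n,\beta,C)>0$ and a H\"older estimate $[u]_{C^{\alpha''}(B_{1/2})}\leq C(|u|_0+|f|_0)$ in $(w,u)$-coordinates. Pulling back via $w=z^\beta$, $z$-distance behaves like the $1/\beta$-th power of $w$-distance near the tip, so the exponent descends to $\alpha' = \beta\alpha''>0$, uniform across the hypothesized class of metrics; patching over finitely many charts with the interior estimate away from $D$ yields the global bound. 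The main obstacle is checking that the change of variables $w=z^\beta$, which is only a H\"older homeomorphism at $z=0$, transforms the weak pairing $\int\langle\nabla u,\nabla\varphi\rangle_\omega\,dV_\omega$ faithfully into an honest divergence-form pairing with merely measurable coefficients, and that the doubling and Poincar\'e constants on the singular cone depend only on $n,\beta$. Once those routine technicalities are settled, the H\"older bound is produced by a direct adaptation of the Euclidean De Giorgi--Nash--Moser argument, with no new tools specific to the K\"ahler or conical setting.
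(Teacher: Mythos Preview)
Your overall strategy---reduce to De Giorgi--Nash--Moser for a uniformly elliptic divergence-form operator in coordinates adapted to the model---is exactly the one the paper uses, and for the pure conical case ($\omega$ quasi-isometric to $\omega_D$) your change $w=z^\beta$ does the job.  However, there is a genuine gap in the $\omega_\epsilon$ branch of the argument.

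The assertion that $\omega_\epsilon$ is \emph{uniformly in $\epsilon$} quasi-isometric to the fixed cone model $g_\beta$ is false.  Near $z=0$ the normal part of $\omega_\epsilon$ is comparable to $(|z|^2+\epsilon)^{\beta-1}\,dz\otimes d\bar z$, which at $z=0$ is finite ($\sim\epsilon^{\beta-1}$), whereas the normal part of $g_\beta$ is $|z|^{2\beta-2}\,dz\otimes d\bar z$, which blows up.  No $\epsilon$-independent constant can bound one by the other on any fixed ball.  Consequently your single coordinate change $w=z^\beta$ does \emph{not} render the equation uniformly elliptic with constants independent of $\epsilon$, and the H\"older exponent and constant you extract would in principle degenerate as $\epsilon\to 0$---precisely the regime one cares about.  (This is not a feature of \cite{GP}; the approximating metrics there are smooth and \emph{not} uniformly comparable to the cone model.)

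The paper repairs this with Lemma~\ref{Existence of Polar coordinates of the omega beta epsilon}: it produces an $\epsilon$-\emph{dependent} polar coordinate $\imath_\epsilon$ (solving $\tfrac{ds}{d\rho}=\beta(\rho^2+\epsilon)^{(\beta-1)/2}$) in which the model $\tfrac{\beta^2}{(|z|^2+\epsilon)^{1-\beta}}\,dz\otimes d\bar z$ becomes $ds^2+a_\epsilon(s)^2 s^2\,d\theta^2$ with the uniform pinching $\beta^2<a_\epsilon\le 1$.  In these coordinates $\omega_\epsilon$---and hence any $\omega$ with $C^{-1}\omega_\epsilon\le\omega\le C\omega_\epsilon$---is uniformly quasi-isometric to the flat Euclidean metric on a full disk (not a cone).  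One then invokes the standard Euclidean result (the paper cites \cite{LSU}, Chapter~III, \S10 for the parabolic version, which implies the elliptic one) with no need for Alexandrov-space doubling or Poincar\'e.  If you replace your $w=z^\beta$ step by this $\imath_\epsilon$ coordinate and drop the cone-space detour, your argument goes through and in fact coincides with the paper's.
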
 
     \begin{thm}\label{Harnack inequality parabolic0} Suppose $\omega_t$ is a (strong) conical K\"ahler-Ricci flow, or the perturbed smooth flow (\ref{Perturbed CKRF}) over $[0,T]$.   Suppose  $u$ is  a bounded weak solution to  
\[\frac{\partial }{\partial t}u
=\Delta_{\omega_{t}}u+f\ \textrm{over}\ M\times [0,T].\]
Then for all $\delta\in (0,T)$, there exists a $\alpha^{\prime}>0$ and a $C(\delta)$ in the sense of Definition \ref{Convention on the constant} such that
\[[u]_{\alpha^{\prime},\frac{\alpha^{\prime}}{2},M\times [\delta,T]}\leq C(\delta)(|u|_{0,M\times [0,T]}+|f|_{0,M\times [0,T]}).\]
\end{thm}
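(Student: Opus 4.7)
The plan is to reduce this parabolic H\"older estimate to the classical De Giorgi--Nash--Moser theory by passing to quasi-coordinates that ``flatten'' the cone singularity of the reference metric, with all constants uniform in the approximation parameter~$\epsilon$.

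First I would cover $M$ by finitely many coordinate charts of two types: (i) charts lying in a fixed compact subset of $M\setminus D$, and (ii) charts adapted to $D$ with coordinates $(z,u_1,\dots,u_{n-1})$ such that $D=\{z=0\}$. On type-(i) charts, the reference metric $\omega_D$ (respectively $\omega_{\epsilon}$, uniformly in $\epsilon$) is smooth and uniformly equivalent to the Euclidean metric, and the hypothesis $C^{-1}\omega_{\mathrm{ref}}\le \omega_t \le C\omega_{\mathrm{ref}}$ says the coefficient matrix $g_t^{i\bar j}$ of $\Delta_{\omega_t}$ is bounded, measurable, and uniformly elliptic. Standard parabolic De Giorgi--Nash--Moser theory (as in Ladyzhenskaya--Solonnikov--Ural'tseva or Lieberman) then delivers the desired H\"older estimate on compact parabolic subcylinders; the factor $C(\delta)$ arises from having to stay away from $t=0$ in the Moser iteration.

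The substantive case is a type-(ii) chart. Here I would introduce the quasi-coordinates $\zeta=z|z|^{\beta-1}/\beta$, keeping $u_j$ fixed, under which the standard cone metric in item~2 of Definition~\ref{definition of (alpha,beta)-conical metric:single divisor.} pulls back to the flat Euclidean metric. By the quasi-isometry built into that definition the pulled-back $\omega_D$ is uniformly equivalent to Euclidean; for $\omega_{\epsilon}$ a direct computation using $\chi_\beta(\epsilon+|S|^2)$ gives the same conclusion with constants independent of $\epsilon$. Combined with the hypothesis $C^{-1}\omega_{\mathrm{ref}}\le \omega_t\le C\omega_{\mathrm{ref}}$, the pulled-back equation $\partial_t u-a_t^{i\bar j}\partial_i\partial_{\bar j}u=f$ in the $(\zeta,u)$-chart is uniformly parabolic with bounded measurable coefficients, and all constants are independent of $t$ and $\epsilon$. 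Parabolic De Giorgi--Nash--Moser then yields a H\"older estimate in the $(\zeta,u)$-variables on $K\times[\delta,T]$, and since $\zeta\leftrightarrow z$ is itself a H\"older homeomorphism, pulling back produces a (possibly smaller-exponent) H\"older estimate with respect to the original conical distance. A finite covering argument globalizes this to $M\times[\delta,T]$.

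The main technical obstacle is the uniformity of all constants as $\epsilon\to 0$. One must check that the ellipticity ratio of $a_t^{i\bar j}$, its $L^\infty$ bounds, and---if one runs the argument in the divergence form $\partial_t(u\det g_t)=\partial_i(\det g_t\, g_t^{i\bar j}\partial_{\bar j}u)+f\det g_t$---the Moser/Poincar\'e constants are all genuinely $\epsilon$-independent. This reduces to checking that in the $(\zeta,u)$-coordinates the density $\omega_{\epsilon}^n/d\mathrm{Leb}$ is bounded above and below uniformly in $\epsilon$, which follows by direct computation from the definition of $\omega_{\epsilon}$ together with Lemma~\ref{Parabolic Laplacian estimate before rescaling}. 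Finally, the elliptic statement Theorem~\ref{Harnack inequality elliptic} then follows as the time-independent specialization, or by running the same argument with the elliptic De Giorgi--Nash--Moser theory in place of its parabolic analogue.
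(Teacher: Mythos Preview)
Your overall strategy---flatten the cone by a change of variables and then invoke parabolic De Giorgi--Nash--Moser---is the right one, and it is essentially what the paper does via Theorem~\ref{Harnack inequality parabolic}. However, two concrete ingredients are missing, and each is genuinely needed.

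\textbf{The flattening coordinate must depend on $\epsilon$.} Your map $\zeta=z|z|^{\beta-1}/\beta$ is tailored to the exact cone $\beta^{2}|z|^{2\beta-2}\,dz\otimes d\bar z$, and indeed makes $\omega_D$ quasi-Euclidean. But for the perturbed reference metric $\omega_{\epsilon}$ with $\epsilon>0$, this fails: near $z=0$ the metric $\omega_{\epsilon}$ is smooth in $z$ and comparable to $dz\otimes d\bar z$, which in your $(\zeta,u)$ chart becomes $|z|^{2-2\beta}\,d\zeta\otimes d\bar\zeta$ and degenerates as $|z|\to 0$. So the claim that ``a direct computation using $\chi_\beta(\epsilon+|S|^2)$ gives the same conclusion with constants independent of $\epsilon$'' is not correct. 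The paper fixes this by introducing, for each $\epsilon$, an adapted polar coordinate $\imath_{\epsilon}$ via the ODE $ds/d\rho=\beta(\rho^2+\epsilon)^{-(1-\beta)/2}$ (Lemma~\ref{Existence of Polar coordinates of the omega beta epsilon}); a short comparison argument shows that in these coordinates $\omega_{\epsilon}$ is trapped between $\beta^{2}\omega_{E,\epsilon}$ and $\omega_{E,\epsilon}$, so the ellipticity constants are uniform in $\epsilon$. This is also what underlies the $\epsilon$-independent Sobolev and Poincar\'e bounds (Remark~\ref{Sobolev and Poincare constant bounds}); invoking Lemma~\ref{Parabolic Laplacian estimate before rescaling} here is circular, since that lemma already uses the Harnack inequality you are trying to prove.

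\textbf{The time derivative of the volume form must be controlled.} Your divergence-form identity $\partial_t(u\det g_t)=\partial_i(\det g_t\,g_t^{i\bar j}\partial_{\bar j}u)+f\det g_t$ is missing the term $u\,\partial_t(\det g_t)$. Equivalently, when one runs Moser iteration against the time-varying measure $dvol_t$, the energy estimate picks up $\int u^{p}\,\partial_t dvol_t$, and one needs $\partial_t dvol_t\le C\,dvol_t$ to close it. This bound is \emph{not} automatic from the quasi-isometry $C^{-1}\omega_{\mathrm{ref}}\le\omega_t\le C\omega_{\mathrm{ref}}$; along the (conical or perturbed) K\"ahler--Ricci flow it is equivalent to a lower bound on the scalar curvature. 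The paper supplies this separately in Lemma~\ref{Bound on the Scalar Curvature}, obtaining $\partial_t dvol_t\le (Ct^{-1}+n\beta)\,dvol_t$ via the evolution inequality for $R$ and the maximum principle. This is why the paper first proves the general Theorem~\ref{Harnack inequality parabolic} under the explicit hypothesis $\partial_t dvol_t\le C\,dvol_t$, and then deduces Theorem~\ref{Harnack inequality parabolic0} by combining it with Lemma~\ref{Bound on the Scalar Curvature}. Your write-up needs both of these pieces to go through.
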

  \begin{proof}[ Proof of Theorem \ref{Harnack inequality parabolic0} and \ref{Harnack inequality elliptic}:] Theorem \ref{Harnack inequality parabolic0} is  directly implied by Theorem \ref{Harnack inequality parabolic} and Lemma \ref{Bound on the Scalar Curvature}. Theorem \ref{Harnack inequality elliptic} is implied  by Theorem \ref{Harnack inequality parabolic} directly.
  \end{proof} 
\begin{lem}\label{Existence of Polar coordinates of the omega beta epsilon}
For any $\epsilon > 0$ and any point $p\in D$, there exists a canonical polar coordinate $\imath_{\epsilon}$ such that 
\begin{equation}
\imath_{\epsilon}^{\star}\{\frac{\beta^2}{(|z|^2+\epsilon)^{1-\beta}}dz\otimes d\bar{z}\}=ds^2+a_{\epsilon}s^2d\theta^2,\ p\ \textrm{is the origin in these coordinates},
\end{equation}
where $a_{\epsilon}$ is a smooth function of $s$ ($s\in [0,r_0)$ for $r_0$ sufficiently small), and $\beta^2<a_{\epsilon}\leq 1$. In particular, we have 
\begin{equation}
\beta^2\omega_{E,\epsilon}< \imath_{\epsilon}^{\star}\omega_{\beta,\epsilon}\leq \omega_{E,\epsilon},
\end{equation}
where $\omega_{E,\epsilon}=ds^2+s^2d\theta^2+\Sigma_{i=1}^{n-1}du_i\otimes d\bar{u}_i $ is the Euclidean metric in the coordinate $\imath_{\epsilon}$. 
\end{lem}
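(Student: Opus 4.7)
The plan is to exploit the rotational symmetry of the one-variable metric $\frac{\beta^{2}}{(|z|^{2}+\epsilon)^{1-\beta}}dz\otimes d\bar{z}$. Passing to ordinary polar coordinates $z=re^{i\theta}$ rewrites it as $\frac{\beta^{2}}{(r^{2}+\epsilon)^{1-\beta}}(dr^{2}+r^{2}d\theta^{2})$, and it then suffices to replace $r$ by the intrinsic arc-length parameter
\[
s(r)=\int_{0}^{r}\frac{\beta\,dt}{(t^{2}+\epsilon)^{(1-\beta)/2}}.
\]
For any $\epsilon>0$ the integrand is smooth and strictly positive on $[0,r_{0}]$, so $r\mapsto s(r)$ is a smooth diffeomorphism with nonvanishing derivative at the origin; its inverse $r(s)$ is smooth. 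Setting $\imath_{\epsilon}(s,\theta)=r(s)e^{i\theta}$ gives the desired canonical polar coordinate map, and the pullback metric becomes $ds^{2}+a_{\epsilon}(s)^{2}s^{2}d\theta^{2}$ with
\[
a_{\epsilon}(s)=\frac{\beta\, r(s)}{s\,(r(s)^{2}+\epsilon)^{(1-\beta)/2}}.
\]
Smoothness of $a_{\epsilon}$ at $s=0$ comes from Taylor expansion at $r=0$: since $s'(0)=\beta/\epsilon^{(1-\beta)/2}\ne 0$ and $s''(0)=0$, one finds $a_{\epsilon}(s(r))=1+O(r^{2})$, whence $a_{\epsilon}(0)=1$.

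For the upper bound $a_{\epsilon}\le 1$: the map $t\mapsto (t^{2}+\epsilon)^{(1-\beta)/2}$ is increasing, so the integrand in the definition of $s(r)$ is bounded below by its value at $t=r$, giving $s(r)\ge \beta r/(r^{2}+\epsilon)^{(1-\beta)/2}$; rearranging yields $a_{\epsilon}\le 1$. For the strict lower bound $a_{\epsilon}>\beta^{2}$, I plan to use positivity of the Gaussian curvature. A direct computation gives $K=\frac{2(1-\beta)\epsilon}{\beta^{2}(r^{2}+\epsilon)^{1+\beta}}>0$, and since the metric in polar form equals $ds^{2}+f(s)^{2}d\theta^{2}$ with $f(s):=a_{\epsilon}(s)s$, the classical formula $K=-f''/f$ gives $f''\le 0$; together with $f(0)=0$ and $f'(0)=1$, concavity of $f$ forces the quotient $a_{\epsilon}=f/s$ to decrease monotonically from $a_{\epsilon}(0)=1$. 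An elementary asymptotic argument---via the substitution $R=r/\sqrt{\epsilon}$, under which $a_{\epsilon}$ becomes a function of $R$ alone (depending only on $\beta$)---identifies $\lim_{R\to\infty}a_{\epsilon}=\beta$. Consequently $a_{\epsilon}(s)>\beta>\beta^{2}$ on any bounded interval $[0,s(r_{0})]$.

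The main obstacle is the strict lower bound on $a_{\epsilon}$; it essentially amounts to recognizing that the smoothed one-dimensional cone has positive Gaussian curvature and deducing monotonicity of $a_{\epsilon}$ from this, rather than trying to estimate the integral defining $s(r)$ directly. With both bounds on $a_{\epsilon}$ in hand the comparison $\beta^{2}\omega_{E,\epsilon}<\imath^{\star}_{\epsilon}\omega_{\beta,\epsilon}\le\omega_{E,\epsilon}$ is immediate: in the common frame that diagonalizes both metrics the eigenvalue along $\partial_{s}$ is $1$ for both, the eigenvalues along the $\partial/\partial u_{i}$ directions agree trivially (the $u_{i}$ part of $\omega_{\beta,\epsilon}$ is the Euclidean metric), and the remaining eigenvalue along $\partial_{\theta}$ differs by the factor $a_{\epsilon}^{2}\in(\beta^{2},1]$.
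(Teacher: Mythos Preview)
Your argument is correct and complete. The coordinate change, the smoothness of $a_{\epsilon}$ at the origin, and the upper bound $a_{\epsilon}\le 1$ are obtained exactly as in the paper (the paper's integration of the differential inequality $\frac{d}{d\rho}[(\rho^{2}+\epsilon)^{(1-\beta)/2}s]\ge\beta$ is the infinitesimal form of your integral estimate $s(r)\ge \beta r/(r^{2}+\epsilon)^{(1-\beta)/2}$).

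The genuine difference is in the strict lower bound. The paper introduces the auxiliary quantity $u=\dfrac{(\rho^{2}+\epsilon)^{(1-\beta)/2}s}{\rho}$, derives the differential inequality $u'<\frac{\beta}{\rho}(1-u)$ with initial value $u(0)=\beta$, and concludes $u<1$ by a direct comparison argument; in your notation this is exactly $a_{\epsilon}=\beta/u>\beta$. You instead recognize that the smoothed $2$-cone has strictly positive Gaussian curvature $K=\dfrac{2(1-\beta)\epsilon}{\beta^{2}(r^{2}+\epsilon)^{1+\beta}}$, translate $K=-f''/f>0$ (with $f=a_{\epsilon}s$) into strict concavity of $f$, and deduce that $a_{\epsilon}=f/s$ decreases monotonically from $1$; the scaling observation $a_{\epsilon}=A(r/\sqrt{\epsilon})$ then identifies the infimum as $\lim_{R\to\infty}A(R)=\beta$. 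Both routes produce the same sharp bound $a_{\epsilon}>\beta\,(>\beta^{2})$, which is precisely what is needed for the metric comparison $\beta^{2}\omega_{E,\epsilon}<\imath_{\epsilon}^{\star}\omega_{\beta,\epsilon}$. Your approach is more geometric and yields as a by-product the monotonicity of $a_{\epsilon}$ and its $\epsilon$-independence after rescaling; the paper's ODE comparison is more self-contained in that it avoids invoking the curvature formula $K=-f''/f$.
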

\begin{proof}
Actually the proof is quite elementary, since this fact is very important we include the full detail here. Let $\rho=|z|$. Define $s$ as  
\begin{equation}\label{ODE for the s in coordinates}
\frac{ds}{d\rho}=\frac{\beta}{(\rho^2+\epsilon)^{\frac{1-\beta}{2}}},\
s(0)=0.
\end{equation}

Then 
\[\frac{\beta^2}{(|z|^2+\epsilon)^{1-\beta}}dz\otimes d\bar{z} =ds^2+a_{\epsilon}s^2d\theta^2,\]
where 
\begin{equation}
a_{\epsilon}=\frac{\beta^2\rho^2}{(\rho^2+\epsilon)^{1-\beta}s^2}.
\end{equation}

From  (\ref{ODE for the s in coordinates}) we have \begin{equation}\label{Weigthted ODE of the coordinate s}
\frac{d[(\rho^2+\epsilon)^{\frac{1-\beta}{2}}s]}{d\rho}=\beta+
\frac{(1-\beta)\rho s}{(\rho^2+\epsilon)^{\frac{1+\beta}{2}}}\geq\ \beta.
\end{equation}
Then \[\beta\rho\leq (\rho^2+\epsilon)^{\frac{1-\beta}{2}}s.\]
Hence
\[a_{\epsilon}= \frac{\beta^2\rho^2}{(\rho^2+\epsilon)^{1-\beta}s^2}\leq 1.\]
Now we would like to study the uniform lower bound of $a_{\epsilon}$. 
Using (\ref{Weigthted ODE of the coordinate s}), denote $v=(\rho^2+\epsilon)^{\frac{1-\beta}{2}}s$,  we compute 
\begin{eqnarray*}
\frac{dv}{d\rho}&<&\beta+
\frac{(1-\beta)(\rho^2+\epsilon)^{\frac{1}{2}} s}{(\rho^2+\epsilon)^{\frac{1+\beta}{2}}}
\\&=& \beta+
(1-\beta)v(\rho^2+\epsilon)^{-\frac{1}{2}}
\\&<&   \beta+
(1-\beta)\frac{v}{\rho}.
\end{eqnarray*}

Then 
\begin{eqnarray*}
\frac{d}{d\rho}(\frac{v}{\rho})&=&-\frac{v}{\rho^2}+\frac{1}{\rho}\frac{dv}{d\rho}
\\&\leq &-\frac{v}{\rho^2}+\frac{\beta}{\rho}+(1-\beta)\frac{v}{\rho^2}
\\&=& \frac{\beta}{\rho}-\frac{\beta v}{\rho^2}
\\&=& \frac{\beta}{\rho}(1-\frac{v}{\rho}).
\end{eqnarray*}
Denote $u=\frac{v}{\rho}$, we get 
\begin{equation}\label{ODE of u in the coordinates}
\frac{d u}{d\rho}<\frac{\beta}{\rho}(1-u).
\end{equation}
Furthermore, from  (\ref{ODE for the s in coordinates}) we have $u(0)=\beta$. Then simple comparison implies 
\begin{equation}\label{Coordinates u less than rho} u<1 \ \textrm{for all} \ \rho.
\end{equation} 
To be precise, if $u(\rho_0)=1$ for some $\rho_0$, then take  $\rho_0$ as the first one among those $\rho$ of which $u(\rho)=1$, then we deduce $\frac{d u}{d\rho}(\rho_0)\geq 0$, which contradicts (\ref{ODE of u in the coordinates}).\\

Then it's easy to see from (\ref{Coordinates u less than rho}) that 
\[a_{\epsilon}=\frac{\beta^2}{u^2}>\beta^2.\]
\end{proof}

When $\epsilon=0$, the coordinate in Lemma \ref{Existence of Polar coordinates of the omega beta epsilon} is exactly the polar coordinate of $\omega_{\beta}$. Denote $|x-y|_{(\beta,\epsilon)}$ as the distance between $x,y$ in the polar coordinate in Lemma \ref{Existence of Polar coordinates of the omega beta epsilon}, and $|x-y|_{holo}$  as the distance in the holomorphic (smooth) coordinates. Comparing the 2 distances gives the equivalence of H\"older continuities with respect to the 2 different model metrics. To be precise, let $C^{,\alpha,(\beta,\epsilon)}(M)$ be the H\"older space of exponent $\alpha$ with respect to the distance $|\ |_{(\beta,\epsilon)}$, and $C^{,\alpha,\frac{\alpha}{2}, (\beta,\epsilon)}(M)$ be its parabolic counterpart, the following is true.

\begin{lem}\label{lem Holder continuity w.r.t different background metrics} Suppose $\epsilon\in [0, \frac{1}{100}]$. Given any 2 points $x,\ y$ such that  $|x|_{(\beta,\epsilon)},\ |y|_{(\beta,\epsilon)}\leq 1$, the following is true 
\begin{equation}\label{equ in lem Holder continuity w.r.t different background metrics}
 C|x-y|_{holo}\leq  |x-y|_{(\beta,\epsilon)}\leq C|x-y|_{holo}^{\beta}.
   \end{equation}
\end{lem}
Consequently 
\begin{itemize}
\item  $C^{,\alpha,(\beta,\epsilon)}(M)$ embeds continuously into $C^{\alpha\beta}(M)$, $C^{\alpha}(M)$ embeds continuously into $C^{,\alpha,(\beta,\epsilon)}(M)$
\item  $C^{,\alpha, \frac{\alpha}{2}, (\beta,\epsilon)}(M)$ embeds continuously into $C^{\alpha\beta, \frac{\alpha}{2}}(M)$, $C^{\alpha,\frac{\alpha}{2}}(M)$ embeds continuously into $C^{,\alpha,\frac{\alpha}{2},(\beta,\epsilon)}(M)$
\end{itemize}

\begin{proof}[Proof of Lemma \ref{lem Holder continuity w.r.t different background metrics}:] The proof is elementary, and is obvious when $\epsilon=0$.  The embedding results are straightforward by Definition \ref{Def convention on Holder norms} and (\ref{equ in lem Holder continuity w.r.t different background metrics}). To prove (\ref{equ in lem Holder continuity w.r.t different background metrics}),  it sufficies to prove the following claim.
\begin{clm}\label{clm comparison of the distances}  
   \begin{equation}
 C|\rho_{x}-\rho_{y}|\leq  |s_{x}-s_{y}|\leq C|\rho_{x}-\rho_{y}|^{\beta}.
   \end{equation}
   In particular we have 
    \begin{equation}
 C\rho_{x}\leq  s_{x}\leq C\rho_{x}^{\beta},\  C\rho_{y}\leq  s_{y}\leq C\rho_{y}^{\beta}. 
   \end{equation}
\end{clm}
Recall  the following  
\begin{eqnarray*}
& & |x-y|_{(\beta,\epsilon)}\approx |s_{x}-s_{y}|+|\sin \frac{\theta_{x}-\theta_{y}}{2}|\sqrt{s_{x}s_{y}}+|x_{T}-y_{T}|,
\\& & |x-y|_{holo}\approx |\rho_{x}-\rho_{y}|+|\sin \frac{\theta_{x}-\theta_{y}}{2}|\sqrt{\rho_{x}\rho_{y}}+|x_{T}-y_{T}|,
\end{eqnarray*}
where $x_{T},\ y_{T}$ are the tangential component of $x,y$ along the singularity. Then Claim \ref{clm comparison of the distances} directly implies (\ref{equ in lem Holder continuity w.r.t different background metrics}).

Next we prove Claim \ref{clm comparison of the distances}. Since $\frac{ds}{d\rho}\leq C\rho^{\beta-1}$, we deduce \begin{equation}
|s_{x}-s_{y}|\leq C|\rho_{x}^{\beta}-\rho_{y}^{\beta}| \leq C|\rho_{x}-\rho_{y}|^{\beta}.
\end{equation}

To obtain lower bound, notice when $\rho>10\sqrt{\epsilon}$, we have
$\frac{ds}{d\rho}\geq C\rho^{\beta-1}$, then  
\begin{equation}
|s_{x}-s_{y}|\geq C|\rho_{x}^{\beta}-\rho_{y}^{\beta}|\geq C|\rho_{x}-\rho_{y}|.
\end{equation}
When $\rho\leq 10\sqrt{\epsilon}$, $\frac{\epsilon^{\frac{\beta-1}{2}}}{C}\leq \frac{ds}{d\rho}\leq C\epsilon^{\frac{\beta-1}{2}}$. Hence 
\begin{equation}
|s_{x}-s_{y}|\geq \frac{\epsilon^{\frac{\beta-1}{2}}}{C}|\rho_{x}-\rho_{y}|.
\end{equation}
Thus when $\rho_{x},\ \rho_{y}\leq 1$, we deduce
   \begin{equation}
|s_{x}-s_{y}|\geq C|\rho_{x}-\rho_{y}|.
\end{equation}
The proof of Claim \ref{clm comparison of the distances} is complete. 
\end{proof}
\begin{rmk}\label{Sobolev and Poincare constant bounds}Actually Lemma \ref{Existence of Polar coordinates of the omega beta epsilon} implies the bound for the Poincare and Sobolev constants in the following sense. For any model metric $\omega$, denote
 \[E_{\omega,\lambda}=\{\omega^{\prime}|r^{\alpha}[\omega^{\prime}]_{\alpha, M\setminus T_{r}(D)}\leq \lambda,\ \frac{\omega}{\lambda}\leq  \omega^{\prime}\leq \lambda \omega\}.\] With respect to the global perturbed metric $\omega_{\epsilon}$,  using Lemma \ref{Existence of Polar coordinates of the omega beta epsilon}, it's quite straight forward to show the  global and local Sobolev constants $C_{S,\epsilon}, C^{\star}_{S,\epsilon}$ for all the metrics in $E_{\omega_{\epsilon},\lambda}$ are uniformly bounded from above independent of $\epsilon$. On the Poincare inequality, the global and local Poincare constants $ C_{P,0}, C^{\star}_{P,0}$ of $E_{\omega_{D},\lambda}$ are uniformly bounded from above. Moreover, using very simple counter-proofs based on the Rellich-Kondrachov compact-imbedding theorem, we deduce 
 that both  the local Poincare  constants $C^{\star}_{P,\epsilon}$ and the global  Poincare  constants $C_{P,\epsilon}$ of $E_{\omega_{\epsilon},\lambda}$ are bounded from  above independent of $\epsilon$. These are necessary for doing the Nash-Moser iteration and the proofs of the Harnack inequalities in   \cite{LSU} and \cite{GT}.
\end{rmk}

\begin{thm}\label{Harnack inequality parabolic}There exists a constant $C$ in the sense of Definition \ref{Convention on the constant} with the following properties. Suppose  $\omega_t$ is
a time-differentiable family of K\"ahler metrics  which is $C^{\alpha}$ away from $D$. Suppose  
\begin{enumerate}
\item $\omega_{t}$ are weak conical-K\"ahler metrics and $ \frac{\omega_{D}}{C}\leq \omega_t\leq C\omega_{D}$ for all $t\in [0,T]$, or 
$\omega_t$ is $C^{\alpha}$ over the whole $M$ (across $D$) in the usual sense and 
$\frac{\omega_{\epsilon}}{C}\leq \omega_t\leq C\omega_{\epsilon}$ for all $t\in [0,T]$ and some $0<\epsilon\leq 1$ ;
\item  
$\frac{\partial }{\partial t}dvol_t\leq Cdvol_t.$
\end{enumerate}
 Suppose  $u$ is  a bounded weak solution to  
\[\frac{\partial }{\partial t}u
=\Delta_{\omega_{t}}u+f\ \textrm{over}\ M\times [0,T].\]
Then for all $\delta\in (0,T)$, there exists a $\alpha^{\prime}>0$ and $C(\delta)$ such that 
\[{[u]}_{\alpha^{\prime},\frac{\alpha^{\prime}}{2},M\times [\delta,T]}\leq C(\delta)(|u|_{0,M\times [0,T]}+|f|_{0,M\times [0,T]}).\]
\end{thm}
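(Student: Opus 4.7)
The plan is to adapt the parabolic De Giorgi--Nash--Moser theory to the conical (or $\epsilon$-smoothed conical) setting, exploiting the uniform Sobolev and Poincar\'e constants in Remark \ref{Sobolev and Poincare constant bounds} together with the polar-type coordinates of Lemma \ref{Existence of Polar coordinates of the omega beta epsilon}. The key point is that, in spite of the degeneration of $\omega_D$ along $D$, everything can be set up so that all constants in the Moser iteration are independent of $\epsilon$ and of $t\in[0,T]$.

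The first step is a localization. Away from $D$ the metrics $\omega_t$ are $C^\alpha$ and uniformly elliptic, so the classical parabolic Moser--Harnack theorem applies directly and yields the desired H\"older estimate on compact subsets of $(M\setminus D)\times[\delta,T]$. Near a point of $D$ I would use Lemma \ref{Existence of Polar coordinates of the omega beta epsilon} to pass to the polar coordinates $\imath_\epsilon$ in which the degenerate factor $\frac{\beta^2}{(|z|^2+\epsilon)^{1-\beta}}dz\otimes d\bar z$ becomes pinched between $\beta^2\omega_{E,\epsilon}$ and $\omega_{E,\epsilon}$. In these coordinates, the quasi-isometry assumption forces $\omega_t$ to be comparable to the fixed Euclidean metric $\omega_{E,\epsilon}$, with constants independent of $\epsilon$, so the PDE $\partial_t u=\Delta_{\omega_t}u+f$ becomes a uniformly parabolic equation in divergence form with bounded measurable coefficients.

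Next I would carry out the parabolic Moser iteration for sub- and supersolutions on parabolic cylinders inside this coordinate chart. Testing the weak equation against $\eta^{2}u_{+}^{2p-1}$ for a cutoff $\eta$ yields the standard chain of energy inequalities; the time-derivative term is handled using hypothesis (2), $\partial_t\, dvol_t\le C\,dvol_t$, which exactly absorbs the error $\int u^{2p}\,\partial_t dvol_t$ that arises from differentiating $\int u^{2p}\,dvol_t$ in time. Iterating via Sobolev (with constant $C_{S,\epsilon}^\star$ uniform in $\epsilon$ by Remark \ref{Sobolev and Poincare constant bounds}) gives the local $L^p\to L^\infty$ bound for subsolutions. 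The complementary weak Harnack inequality for nonnegative supersolutions follows by the usual Bombieri--Giusti cross-over from $L^p$ for small $p>0$, using the uniform Poincar\'e constant $C_{P,\epsilon}^\star$. Combining these two gives the parabolic Harnack inequality on cylinders centered at points of $D$; the standard oscillation-decay argument (cf.\ \cite{LSU} Chapter III, or the original parabolic Harnack paper of Moser) then yields the H\"older estimate $[u]_{\alpha',\alpha'/2,Q}\le C(|u|_0+|f|_0)$ on each parabolic cylinder $Q$, with the initial time layer $[0,\delta]$ providing the necessary room for the backward-in-time Harnack comparisons.

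The main obstacle is \emph{uniformity} of every constant in $\epsilon$. The Sobolev uniformity is almost immediate from Lemma \ref{Existence of Polar coordinates of the omega beta epsilon} because the polar coordinates $\imath_\epsilon$ embed the degenerate cone metric into a metric pinched between two fixed Euclidean metrics. The Poincar\'e uniformity requires the Rellich--Kondrachov contradiction argument indicated in Remark \ref{Sobolev and Poincare constant bounds}: otherwise a sequence $(u_k,\omega_{\epsilon_k})$ saturating the Poincar\'e quotient would, after normalization and extraction, produce a nonzero weak $W^{1,2}$-limit $u_\infty$ on the cone $\omega_0$ (or on a ball in $\omega_0$) with $\nabla u_\infty\equiv 0$ and $u_\infty$ having zero average, a contradiction. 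The volume-distortion uniformity is hypothesis (2) of the theorem. Once these three uniformities are in place, the parabolic Moser--Harnack machinery runs verbatim as in \cite{LSU} and \cite{GT}, with constants depending only on $n,\beta,C,\delta$ and the background data of Definition \ref{Convention on the constant}; letting $\epsilon\to 0$ recovers the conical case as well.
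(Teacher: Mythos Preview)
Your proposal is correct and follows essentially the same route as the paper: reduce to the classical De Giorgi--Nash--Moser theory near $D$ via the polar coordinates of Lemma~\ref{Existence of Polar coordinates of the omega beta epsilon}, use hypothesis~(2) to control the time-derivative of the volume form, and invoke the uniform Sobolev/Poincar\'e constants of Remark~\ref{Sobolev and Poincare constant bounds}. The paper's own proof is a one-paragraph sketch that simply asserts Theorem~10.1 of Chapter~III in \cite{LSU} goes through once these ingredients are in place; you have essentially written out what that sentence means. One small point: the paper handles the genuine conical case ($\epsilon=0$) directly rather than by letting $\epsilon\to 0$, and for that it cites \cite{WYQ} to justify integration by parts across the singular set $D$, a technical step you do not explicitly flag but which is needed for the weak formulation when the metric is only a weak conical K\"ahler metric.
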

   \begin{proof} [ Proof of Theorem \ref{Harnack inequality parabolic}:] With Lemma \ref{Existence of Polar coordinates of the omega beta epsilon}, actually the proof is quite straight forward. The only possible problem is the H\"older estimate near  $D$. Notice that in the coordinate $\imath_{\epsilon}$, $\omega_{t}$ is quasi-isometric  to the Euclidean metric. 
   
 It's well known that the  integration by parts holds true (c.f \cite{WYQ}).  For the reader's convenience we include  the proof of this fact here. We only consider the case when $\omega_{t}$ is weakly conic i.e $ \frac{\omega_{D}}{C}\leq \omega_t\leq C\omega_{D}$. Suppose $B$ is a ball with nonempty intersection with $D$,  and  $u$ is locally $C^{2}$ function defined over $B\setminus D$, $|\nabla  u|\in L^2[B]$,  Suppose  $v\in C_{c}^{0}[B]\cap C^{1}(B\setminus D)$, $|\nabla v|\in L^2(B)$. The integration by parts formula  we want to  show is:   
\begin{equation}\label{equ int by parts}
\lim_{\epsilon_{k}\rightarrow 0}\int_{B\setminus T_{\epsilon_{k}}(D)}v \Delta u\ \omega_{t}^{n}=-\int_{B}\nabla v\cdot\nabla u\ \omega_{t}^{n}\ \textrm{for some sequence}\ \epsilon_{k}\ \rightarrow 0.
\end{equation}

Proof of (\ref{equ int by parts}):  
We compute for any $\epsilon>0$ that 
\begin{eqnarray*} \int_{B\setminus T_{\epsilon}(D)}v \Delta u \ \omega_{t}^{n}
= -\int_{B\setminus T_{\epsilon}(D)}\nabla v\cdot \nabla u\ \omega_{t}^{n}+\int_{\partial T_{\epsilon}(D)}v(\nabla u\cdot n)dA.
\end{eqnarray*}
It suffices to show there exists a sequence $\epsilon_{k}\rightarrow 0$ such that 
\begin{equation}\label{equ lim of v nabla u times n vanishes}
\lim_{k \rightarrow \infty}\int_{\partial T_{\epsilon_{k}}(D)}v(\nabla u\cdot n)dA=0.
\end{equation}

Let $F=|\nabla u|$, extend $F$ to be 0 outside $B$. We compute via H\"older inequality and coarea formula that 
\begin{eqnarray}\label{eqnarray Y integral bound}
C&\geq & \int_{B}F^{2} \ \omega_{t}^{n} \nonumber
\geq C\int_{0}^{1}\int_{r=\epsilon}F^2 dAd\epsilon,\ \  r=|z|^{\beta}\nonumber
\\&\geq C&\int_{0}^{1}Area^{-1}\{r=\epsilon\}(\int_{r=\epsilon}F dA)^2d\epsilon\nonumber
\\&=& C\int_{0}^{1}\frac{Y}{\epsilon(1-\log \epsilon)}d\epsilon,
\end{eqnarray}
where 
\[Y(\epsilon)=(\int_{r=\epsilon}F dA)^2(1-\log \epsilon).\]

Thus it's easy to deduce the following Claim. 
\begin{clm}\label{clm Y tend to 0 via at least a sequence} There exists a sequence $\epsilon_{k}\rightarrow 0$ such that $Y(\epsilon_{k})\rightarrow 0$.
\end{clm}
If not, then there exist $\delta_{0}$ and $r_{0}$ such that 
$Y(\epsilon)\geq \delta_{0}\ \textrm{for all}\ \epsilon<r_{0}.$

Thus (\ref{eqnarray Y integral bound}) implies 
$c\geq C\delta_{0}\int_{0}^{r_0}\frac{1}{\epsilon(1-\epsilon)}d\epsilon=\infty.$
This is a contradiction. Thus Claim \ref{clm Y tend to 0 via at least a sequence} is true. Hence $\lim_{k\rightarrow \infty}\int_{r=\epsilon_{k}}|\nabla u| dA= 0.$ This implies (\ref{equ lim of v nabla u times n vanishes}) is true via the sequence $\epsilon_{k}$ in Claim \ref{clm Y tend to 0 via at least a sequence}. The proof of (\ref{equ int by parts}) is complete.

   Then the proof of Theorem 10.1 in section 10 of Chapter $III$ 
   in \cite{LSU} directly goes through in $B_p(r_0),\ p\in D$ and $r_0$ is sufficiently small, provided we have $\frac{\partial }{\partial t}dvol_t\leq Cdvol_t.$
   
   \end{proof}

 \begin{lem}\label{Bound on the Scalar Curvature}Along the conical K\"ahler-Ricci flow over $[0,t_0]$ ($t_0$ as in Theorem \ref{Bounding Ricci curvature when t>0}),  the scalar curvature $R$ satisfies $R\geq -\frac{C}{t}$ over $M\setminus D$. In particular, we have 
\begin{equation}\label{Volume form derivative lower bound}
\frac{\partial }{\partial t}dvol_t\leq (\frac{C}{t}+n\beta)dvol_t .
\end{equation}

Moreover, along the perturbed smooth flow (\ref{Perturbed CKRF}), (\ref{Volume form derivative lower bound}) also holds.
\end{lem}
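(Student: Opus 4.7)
The plan is to run a maximum-principle argument on the quantity $v = \Delta u$, where $u$ denotes the time derivative of the K\"ahler potential. The key observation is that $u$ itself satisfies a linear parabolic equation obtained simply by differentiating the flow equation in $t$, and this is enough to derive a clean differential inequality for $v$. I will carry the argument out on the smooth approximating flow (\ref{Perturbed CKRF}), where $M$ is compact and everything is smooth, and then transfer the bound to the conical flow via the $\epsilon \to 0$ limit built in Section \ref{Construction of the approximation flows}.

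Set $u_\epsilon = \frac{\partial \bar\phi_\epsilon}{\partial t}$. Differentiating (\ref{Perturbed CKRF}) in $t$ and using $\partial_t g_{\bar\phi_\epsilon, i\bar j} = u_{\epsilon, i\bar j}$ gives
\[
(\partial_t - \Delta_{\bar\phi_\epsilon}) u_\epsilon = \beta u_\epsilon.
\]
Set $v_\epsilon = \Delta_{\bar\phi_\epsilon} u_\epsilon$. Combining $\partial_t g^{i\bar j} = -g^{ia} u_{\epsilon, a\bar b} g^{\bar b j}$ with $\Delta (\partial_t u_\epsilon) = \Delta v_\epsilon + \beta v_\epsilon$ (immediate from the equation above), a direct calculation (no Bochner-type curvature terms appear because $\Delta$ acts on scalar functions) yields
\[
(\partial_t - \Delta_{\bar\phi_\epsilon}) v_\epsilon = \beta v_\epsilon - |u_{\epsilon, i\bar j}|^2_{g_{\bar\phi_\epsilon}}.
\]
Cauchy--Schwarz on the Hermitian matrix $u_{\epsilon, i\bar j}$ gives $|u_{\epsilon, i\bar j}|^2 \geq v_\epsilon^2/n$, so $(\partial_t - \Delta) v_\epsilon \leq \beta v_\epsilon - v_\epsilon^2/n$.

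Now apply the parabolic maximum principle to $w_\epsilon = t(v_\epsilon - n\beta)$ on $M \times [0, T]$. A short calculation gives
\[
(\partial_t - \Delta) w_\epsilon \leq v_\epsilon(1 + t\beta) - n\beta - \tfrac{t}{n}v_\epsilon^2 = -\tfrac{t}{n}(v_\epsilon - n\beta)(v_\epsilon - n/t).
\]
At any interior maximum of $w_\epsilon$ the left-hand side is $\geq 0$, which forces $v_\epsilon$ to lie between $n\beta$ and $n/t$; in either case $v_\epsilon \leq n/t + n\beta$ and hence $w_\epsilon \leq n$. Since $w_\epsilon \equiv 0$ at $t = 0$ and $M$ is compact, this gives $v_\epsilon \leq n/t + n\beta$ on $M \times (0, T]$. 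Finally $\partial_t \log \omega^n_{\bar\phi_\epsilon} = g^{i\bar j}_{\bar\phi_\epsilon} u_{\epsilon, i\bar j} = v_\epsilon$, so this is exactly (\ref{Volume form derivative lower bound}) for the perturbed flow.

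For the conical flow the same argument runs on $M \setminus D$ with $\phi$ and $u = \partial_t \phi$ in place of $\bar\phi_\epsilon$ and $u_\epsilon$. Off $D$ the current $[D]$ in (\ref{Definition of CKRF}) vanishes, so tracing $\partial_t \omega_\phi = \beta \omega_\phi - Ric(\omega_\phi)$ against $\omega_\phi$ gives $\Delta u = n\beta - R$ there, and hence $v \leq n/t + n\beta$ is equivalent to $R \geq -n/t$; the volume-form inequality follows the same way. The step I expect to be the main technical obstacle is justifying the maximum principle on the noncompact $M \setminus D$. The cleanest route is to pass the uniform bound $v_\epsilon \leq n/t + n\beta$ to the limit $\epsilon \to 0$, using the $C^2_{\mathrm{loc}}$ convergence of the approximating flows away from $D$ already obtained in Section \ref{Construction of the approximation flows}; alternatively one can run the maximum principle directly on $M \setminus D$ using the $C^{2,\alpha,\beta}$ regularity of $\phi$ from \cite{CYW} together with a Jeffres-type barrier $a|S|^{2p}$ near $D$ (as in Lemma \ref{uniqueness of weak flows}) to rule out extrema of $w$ approaching the divisor.
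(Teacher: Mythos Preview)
Your proposal is correct and follows essentially the same approach as the paper. The core is the same evolution inequality: the paper writes it for $R-n\beta$ (on the conical flow) and for $v_\epsilon=-\partial_t\log\det g$ (on the perturbed flow), while you write it for $v=\Delta u=n\beta-R$; these are the same quantity up to sign, and the $n/t$ bound comes from the identical ODE comparison you spell out with $w=t(v-n\beta)$. One minor correction on your route (b): to run Jeffres' barrier on $v$ you need $v=n\beta-R$ to extend continuously across $D$, and $\phi\in C^{2,\alpha,\beta}$ from \cite{CYW} alone does not give this; the paper invokes Theorem~\ref{Bounding Ricci curvature when t>0} (which yields $\partial_t\phi\in C^{2,\alpha,\beta}$, hence $R\in C^{\alpha,\beta}$) at this step, and you should too.
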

\begin{proof}[ Proof of lemma \ref{Bound on the Scalar Curvature}:] We have \begin{equation}\label{equ super heat equation for scalar curvature}
\frac{\partial (R-n\beta)}{\partial t}\geq \Delta (R-n\beta)+\frac{(R-n\beta)^2}{n}+\beta (R-n\beta).
\end{equation}
Notice   that   Proposition \ref{Bounding Ricci curvature when t>0} implies $R(t)\in C^{\alpha,\beta}$ when $t>0$. It follows from Jeffres' trick as in the proof of Lemma \ref{uniqueness of weak flows} and maximal principles that  $R\geq -\frac{C}{t}$.  To elaborate how to deal with the conical singularity, we show more detail.  By changing notation, the target estimate is \begin{equation}
\tau R(\tau)>-C,\ \tau\in [0,t_{0}].  
\end{equation}
We consider the flow initiated from time $\frac{\tau}{2}$ by letting 
$s=t-\frac{\tau}{2},\ s\in [0, \frac{\tau}{2}]$. It sufficies to show 
\begin{equation}\label{equ scalar s lower bound}
s R(s)\geq -C. 
\end{equation}
The advantage of doing this is that now the regularity of the  metric is improved such that $R(s)\in C^{,\alpha,\frac{\alpha}{2}, \beta}$. Hence $\lim_{s\rightarrow 0} sR(s)=0$ in H\"older continuous sense. Let 
\begin{equation}\label{equ def of udelta}
u_{\delta}=s(R-n\beta)-\delta |S|^{2p},\ p<\frac{\alpha\beta}{2}\ \textrm{as in Claim}\ \ref{clm supremum attained away from D}.
\end{equation}
(\ref{Laplacian of norm of S is bounded from below}) and (\ref{equ super heat equation for scalar curvature}) imply 
\begin{equation}
\frac{\partial u_{\delta}}{\partial s}\geq \Delta u_{\delta}+\frac{u_{\delta}^2}{ns}+\beta u_{\delta}+\frac{u_{\delta}}{s}+\frac{2u_{\delta}\delta |S|^{2p}}{ns}-\delta C.
\end{equation}

Notice that the short existence time $t_{0}$ is very small in the sense of Definition \ref{Convention on the constant}. Let $\delta$ be small enough, we deduce when $u_{\delta}\leq -100n$, the term $\frac{u_{\delta}^2}{ns}$ is much more positive  than the other terms such that 
   \begin{equation}\label{equ derivative of udelta}
\frac{\partial u_{\delta}}{\partial s}\geq \Delta u_{\delta}+1.
\end{equation}

Then 
\begin{clm}\label{clm udelta has lower bound}$ u_{\delta}> -100n $.
\end{clm}
If the claim does not hold, since $u_{\delta}(0)\geq -1$ (when $\delta$ is sufficiently small) and $u_{\delta}$ can not attain spacewise minimum on $D$ (see Claim \ref{clm supremum attained away from D} with sign reversed),  there exists a space-time point $(x_{0},s_{0})$ such that 
\begin{eqnarray*}
& &u_{\delta}(x_{0},s_{0})=-100n,\ u_{\delta}(x,s)\geq -100n\ \textrm{when}\ s<s_{0},
\\& & x_{0},s_{0} \ \textrm{is the spacewise minimum of}\ u_{\delta},\ x_{0}\notin D.
\end{eqnarray*}

Then $\frac{\partial u_{\delta}}{\partial s}(x_{0},s_{0})\leq 0$. But equation (\ref{equ derivative of udelta}) implies $\frac{\partial u_{\delta}}{\partial s}(x_{0},s_{0})\geq 1$, a contradiction. The proof of Claim \ref{clm udelta has lower bound} is complete. Letting $\delta \rightarrow 0$,  Claim \ref{clm udelta has lower bound} implies (\ref{equ scalar s lower bound}). The proof of the conic flow part of   Lemma \ref{Bound on the Scalar Curvature} is complete. The crucial point is that we need the perturbation by $\delta |S|^{2p}$ in (\ref{equ def of udelta}).

 To prove the conclusion for  the perturbed flow, we don't need the perturbation by $\delta |S|^{2p}$ above.  We first denote
 \[v_{\epsilon}=-\frac{\frac{\partial }{\partial t}dvol_t}{dvol_t}.\]
By routine computation we have 

  \begin{equation}
\frac{\partial v_{\epsilon}}{\partial t}\geq \Delta v_{\epsilon}+\frac{v_{\epsilon}^2}{n}+\beta v_{\epsilon},
\end{equation}
where $v_{\epsilon}$ is a smooth function on which the maximal principle directly works. Then (\ref{Volume form derivative lower bound}) is true for the perturbed flow (\ref{Perturbed CKRF}).
\end{proof}

\section{Bootstrapping of the  conical K\"ahler-Ricci flow and proofs of the main results in the introduction.}
In this short section, we show the solutions to the conical K\"ahler-Ricci flow 
possess maximal regularity when $t>0$ by proving Theorem (\ref{Bounding Ricci curvature when t>0}). This shows the requirements in Theorem \ref{Existence in the weak flow0} are satisfied when $t=\frac{t_0}{2}$ ($t_0$ as in Theorem \ref{Bounding Ricci curvature when t>0}). 

\begin{proof}[ Proof of Theorem \ref{Bounding Ricci curvature when t>0}:]
   \ Denote $v=e^{-\beta t}\frac{\partial \phi}{\partial t}$, then $v$ satisfies the following equation 
\begin{equation}\label{parabolic equation for the twisted potential}\frac{\partial v}{\partial t}=\Delta_{\phi}v.
\end{equation} 
Suppose   $v\in C^{2+\alpha,1+\frac{\alpha}{2},\beta}(0,T]$ (Claim \ref{clm regularity of v}), using the parabolic interior Schauder estimate as in the equation (21) in \cite{CYW}, we obtain the following 
\begin{equation}\label{Bounding the Ricci potential along the flow}
|i\partial \bar{\partial} v|^{(2)}_{\alpha,\frac{\alpha}{2},\beta,M\times [0,t_0]}+|\frac{\partial v}{\partial t}|^{(2)}_{\alpha,\frac{\alpha}{2},\beta,M\times [0,t_0]}\leq C|v|_{0,M\times [0,t_0]}.
\end{equation}
Then equation (\ref{Bounding the Ricci potential along the flow}) directly  implies Theorem \ref{Bounding Ricci curvature when t>0}, because $i\partial \bar{\partial}\frac{\partial \phi}{\partial t}=-Ric+\beta\omega$ over $M\setminus D$.

  Thus it suffices to show 
  \begin{clm}\label{clm regularity of v} $v\in C^{2+\alpha,1+\frac{\alpha}{2},\beta}(0,T]$.
  \end{clm}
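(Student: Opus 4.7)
Set $v=e^{-\beta t}\phi_t$. Differentiating (\ref{equ CKRF potential equation}) in $t$ and using $\partial_t\log\det g_\phi=\Delta_\phi\phi_t$ yields $\phi_{tt}=\Delta_\phi\phi_t+\beta\phi_t$, so $v$ satisfies the homogeneous linear parabolic equation (\ref{parabolic equation for the twisted potential}), namely $v_t=\Delta_\phi v$, on $M\setminus D\times(0,t_0]$. My plan is to establish Claim~\ref{clm regularity of v} by one Harnack step that produces initial weighted H\"older regularity of $v$, followed by a single application of the interior weighted parabolic Schauder estimate of \cite{CYW} to this linear equation.

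For the Harnack step, first note that $v$ is a bounded weak solution: the strong conical flow of Theorem~1.2 in \cite{CYW} comes with $|\phi_t|_{L^\infty(M\times[0,t_0])}\leq C$, hence $|v|_{L^\infty}\leq C$, and the identity $v_t=\Delta_\phi v$ holds pointwise on $M\setminus D$ and extends as a weak solution across $D$ by Jeffres' trick (as in the proof of Lemma~\ref{uniqueness of weak flows}). Apply Theorem~\ref{Harnack inequality parabolic} to $v$: its hypothesis $C^{-1}\omega_D\leq\omega_{\phi(t)}\leq C\omega_D$ is part of the strong conical structure, and the volume-form condition $\partial_t\,dvol_t\leq C\,dvol_t$ is Lemma~\ref{Bound on the Scalar Curvature}. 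This yields $v\in C^{\alpha',\alpha'/2}(M\times[\delta,t_0])$ for every $\delta>0$ and some $\alpha'>0$. Crucially, Theorem~\ref{Harnack inequality parabolic} is proved in the polar coordinates $\imath_\epsilon$ (at $\epsilon=0$) of Lemma~\ref{Existence of Polar coordinates of the omega beta epsilon}, in which $\omega_D$ is quasi-Euclidean, so the H\"older seminorm obtained is precisely the one defining Donaldson's weighted norm near $D$; hence in fact $v\in C^{\alpha',\alpha'/2,\beta}(M\times[\delta,t_0])$.

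With this starting regularity, the remainder is Schauder bootstrap. Since $\phi$ is a strong conical solution, the coefficients $g_\phi^{i\bar j}$ of the linear operator $\partial_t-\Delta_\phi$ lie in $C^{\alpha,\alpha/2,\beta}$. The interior weighted parabolic Schauder estimate of \cite{CYW} (referenced in this paper as ``equation~(21) in \cite{CYW}'') applied to the linear equation $v_t-g_\phi^{i\bar j}\partial_i\partial_{\bar j}v=0$, with the weighted H\"older $v$ from the previous step as input, produces the bound (\ref{Bounding the Ricci potential along the flow}) and gives $v\in C^{2+\alpha,1+\alpha/2,\beta}(M\times[\delta,t_0])$ for every $\delta>0$, which is exactly Claim~\ref{clm regularity of v}.

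The main obstacle is the transition inside the second step: identifying the H\"older regularity produced by Theorem~\ref{Harnack inequality parabolic} with bona fide weighted H\"older regularity along $D$. Conceptually the parabolic Harnack inequality does not distinguish the ordinary and weighted senses once one works in the polar chart of Lemma~\ref{Existence of Polar coordinates of the omega beta epsilon}, but this identification has to be recorded carefully. Everything else is routine: the $L^\infty$ bound on $v$ is in hand from the strong flow, and the weighted linear parabolic Schauder estimate for operators with $C^{\alpha,\alpha/2,\beta}$ coefficients is precisely the estimate that drives the short-time existence argument for the nonlinear flow in \cite{CYW}, so its use here is automatic once weighted H\"older input is available.
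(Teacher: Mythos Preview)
Your argument has a circular dependency. To verify the volume-form hypothesis $\partial_t\,dvol_t\leq C\,dvol_t$ of Theorem~\ref{Harnack inequality parabolic} you invoke Lemma~\ref{Bound on the Scalar Curvature}, but for the \emph{conical} flow that lemma's proof appeals to $R(t)\in C^{\alpha,\beta}$ for $t>0$, which the paper obtains from Theorem~\ref{Bounding Ricci curvature when t>0}; and Theorem~\ref{Bounding Ricci curvature when t>0} is exactly the result whose proof rests on Claim~\ref{clm regularity of v}. So your Harnack step, as written, already consumes the claim you are trying to prove. (The second half of Lemma~\ref{Bound on the Scalar Curvature}, for the smooth perturbed flow, is non-circular, but you are working directly on the conical flow.)

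Independently of that, the Harnack detour is unnecessary and the Schauder step is not what the cited estimate delivers. From the strong short-time existence in \cite{CYW} one already has $\phi\in C^{2+\alpha,1+\frac{\alpha}{2},\beta}$, hence $v=e^{-\beta t}\phi_t\in C^{\alpha,\frac{\alpha}{2},\beta}[0,T]$ with no Harnack needed; the paper simply quotes this. The interior estimate ``equation~(21) in \cite{CYW}'' that yields (\ref{Bounding the Ricci potential along the flow}) is an \emph{a priori} bound: the paper applies it only \emph{after} the qualitative membership $v\in C^{2+\alpha,1+\frac{\alpha}{2},\beta}(0,T]$ is in hand. You use it instead as a regularity theorem, upgrading $C^{\alpha',\frac{\alpha'}{2},\beta}$ input directly to $C^{2+\alpha,1+\frac{\alpha}{2},\beta}$ output, but in the conical setting that upgrade is precisely the content of the linear existence theorem, Theorem~1.8 in \cite{CYW}, which you do not invoke. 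The paper's route is: cut off in time so that $\eta v$ solves $(\partial_t-\Delta_\phi)(\eta v)=\eta' v$ with zero initial data and $C^{\alpha,\frac{\alpha}{2},\beta}$ forcing; solve this inhomogeneous problem by Theorem~1.8 in \cite{CYW} to produce $U\in C^{2+\alpha,1+\frac{\alpha}{2},\beta}$; and then use Jeffres' trick (uniqueness of bounded weak solutions) to conclude $\eta v=U$. That existence-plus-uniqueness manoeuvre is the missing ingredient in your bootstrap.
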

  The claim is proved in a very easy way as follows. For any $\delta>0$, choose a timewise-cutoff function $\eta(t)$ such that 
  \[\eta(t)=0\ \textrm{when}\ t\leq \delta,\ \eta(t)=1\ \textrm{when}\ t\geq 2\delta.\]  
  It suffices to prove  $\eta v\in C^{2+\alpha,1+\frac{\alpha}{2},\beta}[0,T]$.  First we compute 
  \begin{equation}\label{equ evolution of eta v}
  \frac{\partial (\eta v)}{\partial t}=\Delta_{\phi}(\eta v)+\eta^{\prime}v.
  \end{equation}
  Since $ v\in C^{\alpha,\frac{\alpha}{2},\beta}[0,T]$, by Theorem 1.8 in \cite{CYW}, there exists a solution $U\in C^{2+\alpha,1+\frac{\alpha}{2},\beta}[0,T]$ which solves
  \begin{equation}\label{equ evolution of eta v}
  \frac{\partial  U}{\partial t}=\Delta_{\phi} U+\eta^{\prime}v,\ U(x,0)=0.
  \end{equation}
  Consider $W=U-\eta v$, then 
  \begin{equation}\label{equ regularity of W}
  W\in C^{\alpha,\frac{\alpha}{2},\beta}[0,T]
  \end{equation}  and 
   \begin{equation}\label{equ evolution of W}
  \frac{\partial  W}{\partial t}=\Delta_{\phi}W,\ W(x,0)=0 \ \textrm{over}\ M\setminus D.
  \end{equation}
 Using  Jeffres's trick as in the proof of Lemma (\ref{uniqueness of weak flows}), (\ref{equ regularity of W}) and (\ref{equ evolution of W}) imply $W=0$. Then $\eta v=U\in C^{2+\alpha,1+\frac{\alpha}{2},\beta}[0,T]$.
  \end{proof}


\begin{proof}[Proof of Theorem \ref{long time existence of the weak flow} and Theorem \ref{Thm Smooth approximation of the conical flow}:]

When $t=\frac{t_0}{2}$, we define 
\begin{equation}F_{\frac{t_0}{2}}\triangleq\log \frac{|S|^{2-2\beta}\omega^n_{\phi_0}}{\omega^n_{0}}=-h+\frac{\partial \phi}{\partial t}|_{\frac{t_0}{2}}-\beta \phi.
\end{equation}
By Theorem \ref{Bounding Ricci curvature when t>0}, we directly get
\begin{equation}
F_{\frac{t_0}{2}}\in C^{2,\alpha,\beta},\ |F_{\frac{t_0}{2}}|_{2,\alpha,\beta}\leq C.
\end{equation}
Theorem \ref{Existence in the weak flow0} implies the long time existence of the weak flow for all time $t\in [\frac{t_0}{2}, \infty)$. 
Combining  the strong conical flow over $[0,\frac{t_0}{2}]$,  we end up with a strong flow for all $t\in [0,\infty)$, for any $C^{2,\alpha,\beta}$ initial potential $\phi_{0}$.

 Lemma \ref{uniqueness of weak flows} implies the weak flow  coincides with the  strong flow given by Theorem 1.2 in \cite{CYW} till whenever the strong flow exists.
 
By approximating the conical flow over  $[\frac{a_{0}}{2}, T)$ by the flows (\ref{Perturbed CKRF}) as in  section \ref{Construction of the approximation flows},  Theorem \ref{Thm Smooth approximation of the conical flow} follows from
 the second-order estimates in  Proposition \ref{Parabolic Laplacian estimate before rescaling}, the parabolic Evans-Krylov estimate over $M\setminus T_{\delta}(D)$ (as in \cite{Wanglihe1}, $T_{\delta}(D)$ defined in Remark \ref{rmk turbular neighborhood}), and the arguments in Proposition 2.5 in \cite{CDS1}.
\end{proof}

Yuanqi Wang, Department of Mathematics, University of California  at Santa Barbara, Santa Barbara,
CA,  USA.
 
\end{document}